\def\R{{\mathbb{R}}}
\let\reftagform@=\tagform@
\def\tagform@#1{\maketag@@@{(\ignorespaces\textcolor{purple}{#1}\unskip\@@italiccorr)}}
\renewcommand{\eqref}[1]{\textup{\reftagform@{\ref{#1}}}}
\DeclareUrlCommand\ULurl@@{%
  \def\UrlLeft{\uline\bgroup}%
  \def\UrlRight{\egroup}}
\def\ULurl@#1{\hyper@linkurl{\ULurl@@{#1}}{#1}}
\DeclareRobustCommand*\ULurl{\hyper@normalise\ULurl@}
\def\lessim{\ \lower4pt\hbox{$
		\buildrel{\displaystyle <}\over\sim$}\ }
\def\gessim{\ \lower4pt\hbox{$\buildrel{\displaystyle >}
		\over\sim$}\ }
\def\la{\langle}
\def\ra{\rangle}
\def\E{{\cal E}}
\def\ch{{\mbox{\rm ch}\hspace{0.4mm}}}
\def\sh{{\mbox{\rm sh}}}
\def\Av{{\mbox{\rm{Av}}}}
\def\exp{{\mbox{\rm exp}}}
\def\log{{\mbox{\rm log}}}
\def\Bla{\Big{\langle}}
\def\Bra{\Big{\rangle}}
\newcommand{\e}{\mathbb{E}}
\newtheorem{lemma}{\bf Lemma}
\newtheorem{theorem}{\bf Theorem}
\newtheorem{corollary}{\bf Corollary}
\newtheorem{proposition}{\bf Proposition}
\newenvironment{Proof of lemma}{\noindent{\bf Proof of Lemma}}{\hfill$\Box$\newline}
\newenvironment{Proof of theorem}{\noindent{\bf Proof of Theorem}}{\hfill{\footnotesize${\square}$}\newline}
\newenvironment{Proof of theorems}{\noindent{\bf Proof of Theorems}}{\hfill$\Box$\newline}
\newenvironment{Proof of proposition}{\noindent{\bf Proof of Proposition}}{\hfill$\Box$\newline}
\newenvironment{Proof of propositions}{\noindent{\bf Proof of Propositions}}{\hfill$\Box$\newline}
\newenvironment{Proof of exercise}{\noindent{\it Proof of Exercise:}}{\hfill$\Box$}
\begin{document}

\title{Thouless-Anderson-Palmer equations for the Ghatak-Sherrington mean field spin glass model}

\author{Antonio Auffinger \thanks{Department of Mathematics, Northwestern University, tuca@northwestern.edu, research partially supported by NSF Grant CAREER DMS-1653552, Simons Foundation/SFARI (597491-RWC), and  NSF Grant 1764421.} \\
\small{Northwestern University}
	\and Cathy Xi Chen 
	\thanks{Department of Mathematics, xchen56@math.northwestern.edu}
	\\ \small{Northwestern University}
}

\maketitle

\footnotetext{MSC2000: Primary 60F10, 82D30.}
\footnotetext{Keywords: TAP Equations, Ghatak-Sherrington, spin glasses.}

\begin{abstract}
    We derive the Thouless-Anderson-Palmer (TAP) equations for the Ghatak and Sherrington model \cite{ghatak_crystal_1977}. Our derivation, based on the cavity method, holds  at high temperature and at all values of the crystal field. It confirms the prediction of \cite{yokota_first_order_1992}.
\end{abstract}


    \section{Introduction and main results}

The Hamiltonian of the Ghatak and Sherrington (GS) spin-glass model is defined as  the random function  
\begin{equation}
H_N(\boldsymbol \sigma) = \frac{\beta}{\sqrt{N}}\sum_{1\le i<j\le N}g_{ij} \sigma_i \sigma_j + D\sum_{i=1}^N \sigma^2_{i} + h\sum_{i=1}^N \sigma_{i},
\end{equation}
    where $S \geq 1$ is a fixed integer, and $\boldsymbol \sigma = \left(\sigma_1, \ldots, \sigma_N\right)\in \Sigma_N=\{0,\pm 1, \ldots, \pm S \}^N$. The parameters $\beta \ge 0$, $D \in \R$, $h \in \R$ represent the inverse temperature, crystal field and external field respectively, and $g_{ij}$ are i.i.d. standard Gaussian random variables for $1\le i<j\le N$. 
 This model was introduced by Ghatak and Sherrington in \cite{ghatak_crystal_1977} as a generalization of the classical   
Sherrington-Kirkpatrick (SK) model \cite{sherrington_solvable_1975}. It is supposed to model an induced spin glass and an anisotropic extension of the SK model \cite{ghatak_crystal_1977}.

As in the SK model, the study of thermodynamic quantities of the GS model has required significant efforts by many physicists and mathematicians. In particular, it has been predicted the existence of multiple phase transitions as the temperature decreases to zero,   including a second replica symmetric phase at low temperature, a phenomena indicative of inverse freezing \cite{Leuzzi}. This is in sharp contrast with the SK model (and the $p$-spin). We refer the reader to \cite{ghatak_crystal_1977, lage_stability_1982, mottishaw_stability_1985, yokota_first_order_1992, costa_first-order_1994, katsuki_katayama_ghatak-sherrington_1999 , da_costa_zero_temperature_2000}  and the references therein for a brief history and importance of the GS model in the physics community. In the mathematics literature, the most notable progress was Panchenko's result establishing an explicit formula for the limiting free energy \cite{panchenko_free_2005}.

In this paper, we study the behavior of the thermal average of the magnetization  
\[
m = (m_{1}, \ldots, m_{N}) = (\langle \sigma_{1} \rangle, \ldots, \langle \sigma_{N}\rangle )
\]
and its second moment 
\[
p = (p_{1}, \ldots, p_{N})=(\langle \sigma_{1}^{2} \rangle, \ldots, \langle \sigma_{N}^{2}\rangle ),
\]
where for a function $f$ on $\Sigma_N$, we denote $\la f \ra$ the average under the Gibbs measure $G_N$, defined as 
 $$G_N(\{\boldsymbol \sigma\})=\frac{\exp(H_N(\boldsymbol \sigma))}{Z_N},$$
    with 
    $$Z_N= \sum_{\boldsymbol \sigma}\exp(H_N(\boldsymbol \sigma)).$$
It has been predicted (in the case $S=1, h=0$ \cite{mottishaw_stability_1985, yokota_first_order_1992}) that these pairs of random variables satisfy at high temperature (in a sense that will be made precise later)  a system of coupled self consistent equations given by
\begin{equation}\label{eq:TAPphys}
m_{i} \approx  \frac{2\sinh(\beta \xi_{i})}{\exp(\beta \Delta_{i}) + 2 \cosh(\beta \xi_{i})} \quad \quad
p_{i}\approx  \frac{2\cosh(\beta \xi_{i})}{\exp(\beta \Delta_{i}) + 2 \cosh(\beta \xi_{i})}
\end{equation}
with 
\[
\xi_{i}= \frac{1}{\sqrt{N}}\sum_{i}g_{ij}m_{j} - \frac{\beta}{N} m_{i}\sum_{j} g_{ij}^{2}(p_{j}-m_{j}^{2}),
\]
and
\[
\Delta_{i}=-D-\frac{\beta}{2N} \sum_{j}g_{ij}^{2}(p_{j}-m_{j}^{2}).
\]


Equations \eqref{eq:TAPphys} are the analogue of the well-studied TAP equations 
\begin{equation}\label{eqTAPSK}
m_{i} \approx \tanh\left( h + \sum_{k\neq i} g_{ik}m_{k}-\beta^{2}(1-q)m_{i}\right)
\end{equation}
in the SK model \cite{thouless_solution_1977}. 
  
In the mathematics community, there have been several approaches to rigorously understand the TAP equations. First, in the SK model,  Talagrand \cite{talagrand_mean_2011} and Chatterjee \cite{chatterjee_spin_2010}  established \eqref{eqTAPSK}  at high temperature. At low temperature, a version of \eqref{eqTAPSK} where one decomposes the Gibbs measure into ``pure states'' was established by Auffinger-Jagannath \cite{AA_AJ_2019}. A very fruitful approach to TAP was introduced by Bolthausen through an iteration scheme that shares some connections to message passing algorithms \cite{bolthausen_iterative_2014}. Bolthausen's iteration was recently shown to indeed approximate the magnetizations by Chen-Tang\cite{chen_tang_2020}. A  dynamical method to derive \eqref{eqTAPSK} was also very recently proposed by  Adhikari-Brennecke-von Soosten-Yau \cite{adhikari_Yau_2021}. The TAP equations were also viewed as critical point solutions of the TAP functional and studied in \cite{AB, ABC,Subag}.
  
Different than the SK case or the mixed $p$-spin, the TAP equations for the GS model depend on two set of parameters. This creates a few roadblocks to understand its validity. For instance, in the physics community, there is still a debate of what should be the correct analogue of the de Almeida-Thouless line and for which set of parameters $(\beta, D, h)$ one should expect \eqref{eq:TAPphys} to be true. The main goal of this paper is  to derive a rigorous interpretation of $\eqref{eq:TAPphys}$ at high temperature for all values of the crystal field; we follow Talagrand's approach.  As far as we know, this is the first rigorous study of the TAP equations for this model and the first example where these equations manifest as a coupled system.

We will now state our results. Let $X$ be a standard Gaussian random variable. Given $(\beta, D, h)$ as above consider the system of equations in $\mathbb R^{2}$ given by  
 \begin{align} 
        p &= \e \left[\frac{\sum_{\gamma=1}^{S} \gamma^2 \cdot 2 \ch\left[\gamma \left( \sqrt{q}\beta X+h\right)\right]\exp\left(\gamma^2 \left[ D+\frac{\beta^2}{2}(p-q)\right]\right)}{1+ \sum_{\gamma=1}^{S} 2\ch\left[ \gamma \left(\sqrt{q}\beta X+h \right)\right]\exp\left(\gamma^2 \left[ D+\frac{\beta^2}{2}(p-q)\right]\right)}\right], \label{p}  \\
        q &= \e \left[\frac{ \sum_{\gamma=1}^{S} \gamma \cdot 2\sh\left[\gamma \left( \sqrt{q}\beta X+h\right)\right]\exp\left(\gamma^2 \left[ D+\frac{\beta^2}{2}(p-q)\right]\right)}{1+ \sum_{\gamma=1}^{S}  2\ch\left[\gamma \left( \sqrt{q}\beta X+h\right)\right]\exp\left(\gamma^2 \left[ D+\frac{\beta^2}{2}(p-q)\right]\right)}\right]^2. 
        \label{q}
    \end{align}  

This system is the analogue of the fixed-point equation $ q = \mathbb E \tanh^{2}(\beta\sqrt{q}X +h)$ that appears in the SK model. Our first result shows that for $\beta$ small, this system of equations has a unique solution.

    \begin{proposition} \label{pq eqn}
        There exists a $\tilde \beta > 0$ such that for all $0\leq \beta < \tilde{\beta},$ $h \geq 0$, and $D \in \R,$ the system of equations \eqref{p} and \eqref{q} has a unique solution.
    \end{proposition}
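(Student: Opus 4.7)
The plan is to recast the system \eqref{p}--\eqref{q} as a fixed-point equation $(p,q) = F(p,q)$ for a continuous map $F : [0, S^2]^2 \to [0, S^2]^2$, and then invoke Banach's fixed-point theorem by showing that $F$ is a strict contraction once $\beta$ is sufficiently small (with a threshold depending only on $S$).

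For fixed $(p,q)$ and a standard Gaussian $X$, set
\[
u := \sqrt{q}\,\beta X + h, \qquad A := D + \tfrac{\beta^2}{2}(p-q),
\]
and introduce the single-spin measure
\[
\nu_{u,A}(\gamma) := \frac{e^{\gamma u + \gamma^2 A}}{Z(u,A)}, \qquad Z(u,A) := \sum_{\gamma=-S}^{S} e^{\gamma u + \gamma^2 A}, \qquad \gamma \in \{-S, \ldots, S\}.
\]
Using $e^{\gamma u}+e^{-\gamma u}=2\ch(\gamma u)$ and $e^{\gamma u}-e^{-\gamma u}=2\sh(\gamma u)$, one checks that the right-hand sides of \eqref{p} and \eqref{q} are precisely $F_1(p,q) = \e\,\langle \gamma^2 \rangle_{\nu_{u,A}}$ and $F_2(p,q) = \big(\e\,\langle \gamma \rangle_{\nu_{u,A}}\big)^2$, so $F$ manifestly maps $[0,S^2]^2$ into itself.

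The heart of the argument is to bound the operator norm of $DF$ by $C_S\beta^2$, uniformly in $(p,q)\in[0,S^2]^2$, $D\in\R$, and $h\geq 0$. The $p$-derivative is immediate: $p$ enters only through $A$ with $\partial_p A = \beta^2/2$, and $\partial_A$ of a spin average is a covariance of polynomials in $\gamma$ under $\nu_{u,A}$, which is bounded by a constant depending only on $S$. The $q$-derivative is subtler because $\partial_q u = \beta X/(2\sqrt{q})$ is singular at $q=0$. However, Gaussian integration by parts gives, for smooth $f$ of polynomial growth,
\[
\e\!\left[\frac{\beta X}{2\sqrt{q}}\, f(u)\right] \;=\; \frac{\beta^2}{2}\, \e\,f'(u),
\]
so the $\sqrt q$ cancels. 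Applying this with $f$ equal to the relevant $u$-derivative of $\langle\gamma^2\rangle_{\nu_{u,A}}$ or $\langle\gamma\rangle_{\nu_{u,A}}$, and combining with the explicit $\partial_q A = -\beta^2/2$ contribution, yields $|\partial_q F_i| \leq C_S\beta^2$. The bound for $F_2$ then follows from the chain rule using $|\e\,\langle\gamma\rangle_{\nu_{u,A}}|\leq S$.

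With $\|DF\|_{\mathrm{op}} \leq C_S\beta^2$ in hand, pick $\tilde\beta := (2C_S)^{-1/2}$: for $\beta < \tilde\beta$ the map $F$ is a strict contraction on the complete (compact) space $[0,S^2]^2$, and Banach's theorem produces a unique fixed point. The main technical point to keep honest is the integration-by-parts step uniformly in $q\in[0,S^2]$, in particular at $q=0$; this can be handled either by deriving $\partial_q F_i$ for $q>0$ and verifying continuity as $q\downarrow 0$, or, more cleanly, by first using Stein's lemma inside the original right-hand sides of \eqref{p}--\eqref{q} to eliminate $\sqrt q$ from the denominator before differentiating. Once that is settled, the bounds on $\partial_p F_i$ and $\partial_q F_i$ are uniform in $D\in\R$ because they involve only bounded moments of $\nu_{u,A}$ (a probability measure on $\{-S,\ldots,S\}$), explaining why $\tilde\beta$ may be chosen independently of $D$ and $h$.
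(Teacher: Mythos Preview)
Your proposal is correct and follows essentially the same approach as the paper: both recast the system as a fixed point of a self-map on $[0,S^2]^2$, bound all four partial derivatives by $C_S\beta^2$ (using Gaussian integration by parts to cancel the $1/\sqrt{q}$ arising from $\partial_q u$), and conclude via Banach's fixed-point theorem. The paper carries out the derivative computations explicitly in terms of auxiliary functions $\phi,\psi,\theta,\eta$ and arrives at the concrete threshold $\tilde\beta = 165^{-1/4}S^{-2}$, whereas you package the same computation in the single-spin measure notation and are somewhat more explicit about the potential singularity at $q=0$; substantively the arguments coincide.
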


Assume that $\beta < \tilde \beta$ and the pair $(p,q)$ is the unique solutions of   \eqref{p} and \eqref{q}.  Our main result describes the validity of the TAP equations in the $L^{2}$ sense as follows.

    \begin{theorem}\label{TAP}
        There exists some $K, \hat{\beta} > 0$, such that for all $0 \leq \beta < \hat \beta$, $h\geq 0$ and $D\in \R$, we have for all $N\geq 1$
        \begin{align}
            \e \left[ \la \sigma_N \ra - \frac{ \sum_{\gamma=1}^{S}  \gamma \cdot 2 \sh\left[\gamma \left( \beta \xi_N + h\right)\right]\exp\left(\gamma^2\Delta \right)}{1 + \sum_{\gamma=1}^{S}  2 \ch \left[\gamma \left( \beta \xi_N + h\right)\right]\exp\left(\gamma^2\Delta \right)}\right]^2 &\le \frac{K}{\sqrt{N}}, \label{TAP1}\\
            \e \left[ \la \sigma_N^2 \ra - \frac{\sum_{\gamma=1}^{S}  \gamma^2 \cdot 2 \ch\left[\gamma \left( \beta \xi_N + h\right)\right]\exp\left(\gamma^2\Delta \right)}{1 + \sum_{\gamma=1}^{S} 2 \ch\left[\gamma \left( \beta \xi_N + h\right)\right]\exp\left(\gamma^2\Delta \right)}\right]^2 &\le \frac{K}{\sqrt{N}}, \label{TAP2}
        \end{align}
        where 
        \begin{align*}
            \xi_N &= \sum_{i \le N-1} g_{iN} \la \sigma_i \ra -\beta (p-q) \la \sigma_N \ra  , \, \text{and} \, \, \, \Delta = D + \frac{1}{2}\beta^2(p-q).
        \end{align*}
    \end{theorem}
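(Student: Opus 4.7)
The plan is to adapt Talagrand's cavity-method approach, now in the two-order-parameter setting of the GS model. The starting point is an exact identity. Isolating spin $N$ in the Hamiltonian as $H_N(\sigma) = H^-(\sigma^-) + \sigma_N Y(\sigma^-) + D\sigma_N^2 + h\sigma_N$ with $\sigma^-=(\sigma_1,\ldots,\sigma_{N-1})$ and $Y(\sigma^-) = \frac{\beta}{\sqrt N}\sum_{i<N}g_{iN}\sigma_i$, and summing the numerators of $\langle\sigma_N\rangle$ and $\langle\sigma_N^2\rangle$ over $\sigma_N \in \{-S,\ldots,S\}$, one obtains the exact cavity identities
\begin{equation*}
\langle\sigma_N\rangle \;=\; \langle F_D(Y(\sigma^-))\rangle_{-}, \qquad \langle\sigma_N^2\rangle \;=\; \langle G_D(Y(\sigma^-))\rangle_{-},
\end{equation*}
where $\langle\cdot\rangle_{-}$ is the cavity Gibbs measure on $\Sigma_{N-1}$ generated by $H^-$ (a GS Hamiltonian on $N-1$ spins with coupling $\beta/\sqrt N$), and $F_D, G_D$ are the right-hand sides of \eqref{TAP1}, \eqref{TAP2} but with $\beta\xi_N$ replaced by $Y$ and $\Delta$ replaced by $D$. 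The task is then to replace, inside $\langle\cdot\rangle_{-}$, the argument $Y$ by $\beta\xi_N$ and the parameter $D$ by $\Delta = D + \tfrac12\beta^2(p-q)$, with $L^2$ error $O(N^{-1/2})$.

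Next I would perform a second-order Taylor expansion of $F_D, G_D$ around the conditional mean $\bar Y = \frac{\beta}{\sqrt N}\sum_{i<N}g_{iN}\langle\sigma_i\rangle_{-}$. Under $\langle\cdot\rangle_{-}$, the fluctuation variance is $\mathrm{Var}_{-}(Y) = \frac{\beta^2}{N}\sum g_{iN}^2(\langle\sigma_i^2\rangle_{-} - \langle\sigma_i\rangle_{-}^2)$, which at high temperature concentrates around $\beta^2(p-q)$. Using the single-spin partition-function identities $\partial_D Z_D = \partial_y^2 Z_D$ and the resulting $\partial_D F_D = F_D'' + 2 F_D F_D'$ (and the analogous relation for $G_D$), the second-order correction rearranges exactly into a shift $D\mapsto\Delta$ together with an Onsager shift of the field by $-\beta^2(p-q)\langle\sigma_N\rangle$; after using cavity stability to substitute $\langle\sigma_i\rangle$ for $\langle\sigma_i\rangle_{-}$ in $\bar Y$, the expression $\beta\xi_N$ appears. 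The same scheme applied to $G_D$ yields \eqref{TAP2}.

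Justifying each approximation rigorously requires the standard high-temperature replica-symmetric concentration estimates, which I would prove along the way. The key ingredients are: joint $L^2$ concentration of the two order parameters, $\e\langle (\bar p - p)^2\rangle, \e\langle (q_{12} - q)^2\rangle \le K/N$, where $\bar p = \frac1N\sum_i\sigma_i^2$ and $q_{12} = \frac1N\sum_i\sigma_i^1\sigma_i^2$; cavity stability $\e[(\langle\sigma_i\rangle_{-} - \langle\sigma_i\rangle)^2] \le K/N$; and uniform a priori bounds on $F_D, G_D$ and their first few derivatives. These are obtained by a smart-path interpolation between $H_N$ and a decoupled product Hamiltonian, combined with Gaussian integration by parts which converts path derivatives of the free energy into expectations of $(q_{12}-q)^2$ and $(\bar p - p)^2$ plus cross terms. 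Uniqueness of the solution of \eqref{p}--\eqref{q} from Proposition \ref{pq eqn} ensures that the deterministic limits of $\bar p$ and $q_{12}$ are indeed $p$ and $q$.

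The main obstacle is precisely this coupled structure. In the SK setting a single overlap $q_{12}$ drives the concentration argument, whereas in the GS model the integration-by-parts identity produces genuine cross terms between $(q_{12}-q)$ and $(\bar p - p)$. These must be absorbed by exhibiting joint strict positivity of a $2\times 2$ quadratic form at the solution $(p,q)$, and this is the structural reason the admissible range of $\hat\beta$ in the theorem may need to be taken strictly smaller than the $\tilde\beta$ of Proposition \ref{pq eqn}. Once this joint concentration is in place, all error terms from the Taylor scheme assemble to the announced $K/\sqrt N$ bound, yielding both \eqref{TAP1} and \eqref{TAP2}.
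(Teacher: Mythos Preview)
Your starting identity is incorrect: after integrating out $\sigma_N$ one does \emph{not} obtain $\langle\sigma_N\rangle=\langle F_D(Y)\rangle_-$, but rather the ratio of cavity averages
\[
\langle\sigma_N\rangle=\frac{\bigl\langle \sum_{\epsilon}\epsilon\,e^{\epsilon Y+D\epsilon^2+h\epsilon}\bigr\rangle_-}{\bigl\langle \sum_{\epsilon}e^{\epsilon Y+D\epsilon^2+h\epsilon}\bigr\rangle_-},
\]
and average of a ratio is not a ratio of averages. A Taylor expansion of $\langle F_D(Y)\rangle_-$ therefore does not produce $\langle\sigma_N\rangle$. Relatedly, your ``cavity stability'' step is where the Onsager term actually lives, not the second-order Taylor term: although $\e(\langle\sigma_i\rangle-\langle\sigma_i\rangle_-)^2\le K/N$ for each $i$, the aggregate $\frac{\beta}{\sqrt N}\sum_{i<N}g_{iN}(\langle\sigma_i\rangle-\langle\sigma_i\rangle_-)$ is $O(1)$, and its leading part is precisely $\beta^2(p-q)\langle\sigma_N\rangle$. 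In your write-up you generate an Onsager-type shift from the Taylor step \emph{and} claim the cavity substitution is negligible; those two mistakes happen to cancel at the heuristic level, but neither step is correct on its own, so the argument cannot be made rigorous as written.

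The paper avoids both pitfalls by following Talagrand: it works directly with the ratio of averages, proves a Gaussian approximation for the centered cavity field $\frac{1}{\sqrt N}\sum_i y_i\dot\sigma_i$ (replacing it by $\xi\sqrt{p-q}$ in $L^2$ for smooth test functions), applies this with $U(x)=e^{\epsilon\beta x}$ to approximate numerator and denominator separately, and then uses a companion estimate to show $\frac{\beta}{\sqrt N}\sum g_{iN}\langle\sigma_i\rangle_-\approx \frac{\beta}{\sqrt N}\sum g_{iN}\langle\sigma_i\rangle-\beta^2(p-q)\langle\sigma_N\rangle$, which is where the Onsager term enters. Your identification of the required inputs (joint $L^2$ concentration of $R_{1,1}-p$ and $R_{1,2}-q$, obtained by a smart-path interpolation with cross terms between the two overlaps) matches the paper exactly and is the genuinely new ingredient relative to SK. If you want to salvage the Taylor route, expand numerator and denominator separately using your nice heat-equation identity $\partial_D Z=\partial_y^2 Z$ to shift $D\mapsto\Delta$, and then extract the Onsager correction from the cavity-to-full conversion of $\bar Y$ rather than from the Taylor remainder.
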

  

    The proof of Theorem \ref{TAP} follows the cavity approach as in Section $1.6$ and $1.7$ of Talagrand's book \cite{talagrand_mean_2011}. The main difference between the SK model and the GS model is that we now need to control the self-overlap, and relate it to the solutions $(p,q)$. This requires new estimates and a careful analysis of the fixed point equation. The rest of the paper is organized as follows. In the next section, we show concentration of the overlap and self-overlap, the main tool to prove Theorem \ref{TAP}. In Section 3, we provide the proof of Theorem \ref{TAP}. The proof of Proposition \ref{pq eqn} is left to the last section.

\subsection{Acknowledgments}
Both authors would like to thank Wei-Kuo Chen for several suggestions on a previous version of this work, including a simplification of the proof of Proposition 1. They also would like to thank Si Tang for early discussions and help with computer simulations.

    \section{Concentration of overlaps}

    We denote the overlap between configurations $\boldsymbol \sigma^1$ and $\boldsymbol \sigma^2$, and the self-overlap of $\boldsymbol \sigma$ respectively by
    $$R_{1,2}=\frac{1}{N}\sum_{i \leq N}\sigma_i^1\sigma_i^2 \quad \text{and} \quad R_{1,1}=\frac{1}{N}\sum_{i \leq N}(\sigma_i)^2.$$
    In this section, we use the cavity method to show concentration of overlaps $R_{1,2}$ and $R_{1,1}$. We assume from now on that $\beta < \tilde \beta$ and $(p,q)$ are the solutions given in Proposition \ref{pq eqn}.
    
    \begin{proposition}\label{overlap}
        There exists a $\hat \beta > 0$, such that for all $\beta < \hat \beta $, we have:
        \begin{align} 
            \e  \left \la \left(R_{1,2}-q \right)^2 \right \ra  &\le \frac{16S^2}{N}, \notag \\
            \e \left \la \left( R_{1,1}-p  \right)^2 \right \ra &\le \frac{16S^4}{N}.\notag
        \end{align}
    \end{proposition}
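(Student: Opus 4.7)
The plan is to adapt Talagrand's cavity argument (Sections 1.6--1.7 of \cite{talagrand_mean_2011}) to the GS setting, where one must control simultaneously the overlap and the self-overlap. By symmetry across sites, both inequalities reduce to single-coordinate identities:
\[
\e\la(R_{1,2}-q)^2\ra = \e\la(\sigma_N^1\sigma_N^2 - q)(R_{1,2}-q)\ra, \quad \e\la(R_{1,1}-p)^2\ra = \e\la((\sigma_N^1)^2 - p)(R_{1,1}-p)\ra,
\]
so the task is to extract the $N$-th spin through a cavity computation and then close up a coupled self-bounding inequality.

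First, I would split $H_N(\boldsymbol\sigma) = H_{N-1}(\hat{\boldsymbol\sigma}) + \sigma_N y_N + D\sigma_N^2 + h\sigma_N$, where $\hat{\boldsymbol\sigma} = (\sigma_1,\ldots,\sigma_{N-1})$ and $y_N = \frac{\beta}{\sqrt N}\sum_{i<N} g_{iN}\sigma_i$. Conditionally on $\hat{\boldsymbol\sigma}$, the quantities $\la\sigma_N^k\ra$ become explicit rational functions of $y_N$, namely those appearing on the right-hand sides of \eqref{p} and \eqref{q}. I would then linearize each of these functions around the deterministic values $\sqrt q\,\beta X + h$ and $D + \frac{\beta^2}{2}(p-q)$ that define the fixed point, so that the expressions match the right-hand sides of the self-consistent system.

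Next, I would apply Gaussian integration by parts in the cavity couplings $g_{iN}$ to the identities above. Each derivative with respect to $g_{iN}$ produces factors of $\sigma_i$ or $\sigma_i^2$, which, after summing over $i<N$, reassemble into $R_{1,2}-q$ and $R_{1,1}-p$ once the constant parts are subtracted via the fixed-point equations \eqref{p}--\eqref{q}. This yields a coupled self-bounding system of the form
\[
A_N \le \frac{C_1}{N} + L_{11}(\beta)A_N + L_{12}(\beta)B_N, \qquad B_N \le \frac{C_2}{N} + L_{21}(\beta)A_N + L_{22}(\beta)B_N,
\]
where $A_N = \e\la(R_{1,2}-q)^2\ra$, $B_N = \e\la(R_{1,1}-p)^2\ra$, and the entries $L_{ij}(\beta)$ vanish as $\beta\to 0$. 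For $\beta$ sufficiently small the matrix $(L_{ij})$ has spectral radius strictly less than $1$, the linear terms may be absorbed, and one obtains the quoted bounds $16S^2/N$ and $16S^4/N$; the extra factor of $S^2$ for $B_N$ reflects the crude pointwise bound $|\sigma_i^2|\le S^2$ versus $|\sigma_i|\le S$ in the remainder estimates.

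The main obstacle, and the new feature compared to the SK case, is the genuinely two-dimensional nature of the recursion: the cavity linearization of $\la\sigma_N^1\sigma_N^2\ra$ and $\la(\sigma_N^1)^2\ra$ produces cross-terms in both $R_{1,2}-q$ and $R_{1,1}-p$, and one must check that after substituting the fixed-point relations satisfied by $(p,q)$ the residual linear map is a genuine contraction for small $\beta$. Uniform Lipschitz and $L^\infty$ bounds on the rational functions of $y_N$ (smooth in their arguments and bounded uniformly in $\boldsymbol\sigma\in\Sigma_N$) are what control the Taylor remainders and make the $1/N$ correction explicit.
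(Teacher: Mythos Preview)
Your overall strategy---symmetry reduction to a single site, cavity extraction of $\sigma_N$, and a self-bounding inequality closed for small $\beta$---is correct and matches the paper in spirit. Two points of your implementation, however, differ from what the paper actually does.

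First, the paper does \emph{not} Taylor-linearize the rational cavity functions around a deterministic point. Instead it introduces an explicit interpolating Hamiltonian
\[
H_t(\boldsymbol\sigma)=H_{N-1}(\boldsymbol\rho)+\sigma_N\Bigl[\sqrt{t}\,\tfrac{\beta}{\sqrt N}\textstyle\sum_{i<N}g_{iN}\sigma_i+\sqrt{1-t}\,\beta z\sqrt{q}\Bigr]+(1-t)\tfrac{\beta^2}{2}(p-q)\sigma_N^2+D\sigma_N^2+h\sigma_N,
\]
with an auxiliary standard Gaussian $z$. At $t=0$ the last spin decouples completely and $\e\la\sigma_N^2\ra_0=p$, $\e\la\sigma_N\ra_0^2=q$ hold \emph{exactly} by the fixed-point equations---no Taylor remainder to control. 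One then bounds $|\nu_1(f)-\nu_0(f)|\le\sup_t|\nu_t'(f)|$, and Gaussian integration by parts in the derivative $\nu_t'$ is what produces the overlap factors $R_{l,l'}^--q$ and $R_{l,l}^--p$. Your ``linearize around $\sqrt q\,\beta X+h$'' step is the weakest part of the write-up: as stated there is no external Gaussian $X$ in the problem until you put one there, and a genuine Taylor approach would force you to control second-order remainders in the random cavity field, which is extra work the interpolation avoids.

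Second, the paper does \emph{not} solve a coupled $2\times2$ system. When bounding $\nu_1((R_{1,2}-q)^2)$ it simply estimates the self-overlap contributions in $\nu_t'$ crudely via $|R_{l,l}^--p|\le 2S^2$, so they get absorbed into the constant multiplying $\nu_1(|f|)$; the resulting inequality is self-bounding in $R_{1,2}$ alone. The $R_{1,1}$ bound is proved analogously and independently. Your coupled-contraction argument would also work and is arguably more systematic, but it is not what the paper does, and it is not needed to reach the stated constants $16S^2/N$ and $16S^4/N$.
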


    We start with some notations and preliminary results needed to prove Proposition \ref{overlap}. For $\boldsymbol \sigma = (\sigma_1, \ldots, \sigma_N)$, $\boldsymbol \rho = (\sigma_1, \ldots, \sigma_{N-1})$, we write 
    \begin{align}
        H_N(\boldsymbol \sigma) &= \frac{\beta}{\sqrt{N}}\sum_{1\le i<j\le N}g_{ij} \sigma_i \sigma_j + D\sum_{i=1}^N \sigma^2_{i} + h\sum_{i=1}^N \sigma_{i} \notag\\ 
        &=  H_{N-1}(\boldsymbol \rho) +  \sigma_N \cdot \frac{\beta}{\sqrt{N}}\sum_{ i< N}g_{iN} \sigma_i  + D\sigma_N^2   + h \sigma_N,\notag
    \end{align}
    where, with a slight abuse of notation,
    \begin{align} 
        H_{N-1}(\boldsymbol \rho)= \frac{\beta}{\sqrt{N}}\sum_{1\le i<j\le N-1}g_{ij} \sigma_i \sigma_j +D\sum_{i \le N-1}\sigma^2_{i}+h\sum_{i \le N-1}\sigma_{i}. \label{N-1 Hamil}
    \end{align}
    With the notation above, we have the following identity. Its proof is identical to the proof of Proposition 1.6.1 in \cite{talagrand_mean_2011}.

    \begin{proposition} \label{Expectation}
        Given a function $f$ on $\Sigma_N$, it holds that 
        \begin{align}
            \la f(\boldsymbol \sigma) \ra = \frac{\Bla \Av\left(f\left(\boldsymbol \sigma \right)\exp\left( \sigma_N \cdot \frac{\beta}{\sqrt{N}}\sum_{ i< N}g_{iN} \sigma_i  + D \sigma_N^2  + h \sigma_N    \right)\right)\Bra_{-}}{\Bla \Av\left(\exp\left( \sigma_N \cdot \frac{\beta}{\sqrt{N}}\sum_{ i< N}g_{iN} \sigma_i  +  D\sigma_N^2 + h \sigma_N \right)\right)\Bra_{-}}, \notag
        \end{align}
        where $\Av$ means average over $\sigma_N= 0,\pm 1, \ldots, \pm S$, and $\la \cdot \ra_{-}$ is the average under the Gibbs measure with respect to the Hamiltonian $H_{N-1}.$
    \end{proposition}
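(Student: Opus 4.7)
This is a direct computational verification starting from the definition of the Gibbs average. First, I would write
\[
\la f(\boldsymbol \sigma) \ra = \frac{1}{Z_N}\sum_{\boldsymbol \sigma \in \Sigma_N} f(\boldsymbol \sigma)\exp(H_N(\boldsymbol \sigma))
\]
and substitute the additive splitting already recorded just above the statement, namely $H_N(\boldsymbol \sigma) = H_{N-1}(\boldsymbol \rho) + W(\boldsymbol \rho,\sigma_N)$, where $\boldsymbol \rho = (\sigma_1,\ldots,\sigma_{N-1})$ and
\[
W(\boldsymbol \rho,\sigma_N) = \sigma_N \cdot \frac{\beta}{\sqrt{N}}\sum_{i<N} g_{iN}\sigma_i + D\sigma_N^2 + h\sigma_N.
\]
Since the exponential factors multiplicatively, $\exp(H_N(\boldsymbol \sigma)) = \exp(H_{N-1}(\boldsymbol \rho))\cdot \exp(W(\boldsymbol \rho,\sigma_N))$.

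Next, I would break the sum $\sum_{\boldsymbol \sigma \in \Sigma_N}$ into $\sum_{\boldsymbol \rho \in \Sigma_{N-1}} \sum_{\sigma_N \in \{0,\pm 1,\ldots,\pm S\}}$ and replace the inner sum over the $(2S+1)$ values of $\sigma_N$ by $(2S+1)\,\Av$. Both numerator and denominator of the resulting ratio carry the same factor $(2S+1)$, which cancels, leaving
\[
\la f(\boldsymbol \sigma) \ra = \frac{\sum_{\boldsymbol \rho} \exp(H_{N-1}(\boldsymbol \rho))\,\Av\bigl(f(\boldsymbol \rho,\sigma_N)\exp(W(\boldsymbol \rho,\sigma_N))\bigr)}{\sum_{\boldsymbol \rho} \exp(H_{N-1}(\boldsymbol \rho))\,\Av\bigl(\exp(W(\boldsymbol \rho,\sigma_N))\bigr)}.
\]
Dividing numerator and denominator by $Z_{N-1} = \sum_{\boldsymbol \rho}\exp(H_{N-1}(\boldsymbol \rho))$, each of the two ratios $\sum_{\boldsymbol \rho}\exp(H_{N-1}(\boldsymbol \rho))(\cdots)/Z_{N-1}$ is precisely a Gibbs average with respect to $H_{N-1}$, i.e., $\la \cdot \ra_-$, which yields the claimed identity.

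The only subtlety worth flagging is the order of the two averages: the $\Av$ over $\sigma_N$ must sit \emph{inside} the bracket $\la \cdot \ra_-$, because both $f$ and the cavity weight $\exp(W(\boldsymbol \rho,\sigma_N))$ couple $\sigma_N$ to $\boldsymbol \rho$ through the cavity interaction $\sum_{i<N} g_{iN}\sigma_i$, so the two averages do not commute. No analytic estimate is needed; the argument is purely algebraic rearrangement, and I do not expect any real obstacle beyond careful bookkeeping of the two separate sums.
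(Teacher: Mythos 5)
Your proof is correct and is essentially the argument the paper relies on: the paper simply defers to Proposition 1.6.1 of Talagrand's book, whose proof is exactly this algebraic splitting $H_N(\boldsymbol\sigma)=H_{N-1}(\boldsymbol\rho)+W(\boldsymbol\rho,\sigma_N)$, the factorization of the sum over $\sigma_N$ into $(2S+1)\Av$, and division by $Z_{N-1}$ to recognize $\la\cdot\ra_-$. One small quibble with your closing remark: since $\Av$ (over $\sigma_N$) and $\la\cdot\ra_-$ (over $\boldsymbol\rho$) are linear averages over disjoint variables, they do commute; the genuine point is only that $\Av$ must appear inside both the numerator and denominator brackets of the single ratio, rather than the ratio being taken before averaging.
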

Now consider the interpolated Hamiltonian:
    \begin{align} 
        H_t(\boldsymbol \sigma) = H_{N-1}(\boldsymbol \rho) + \sigma_N \left[ \sqrt{t}\frac{\beta}{\sqrt{N}}\sum_{ i< N}g_{iN} \sigma_i + \sqrt{1-t} \beta z \sqrt{q} \right] +(1-t) \cdot \frac{\beta^2}{2}(p-q) \sigma_N^2 &+  D \sigma_N^2 \notag\\ 
        &\phantom{=}\ +  h \sigma_N, \notag
    \end{align}
    wherer $z$ is a standard Gaussian random variable independent of $g_{ij}$.
    We denote the overlap of the first $N-1$ coordinates by
    $$R_{l,l'}^- = \frac{1}{N}\sum_{i < N}\sigma_i^l \sigma_i^{l'}.$$
    To simplify the notation, let $\epsilon_l = \sigma_N^l$, and write
    \begin{align}
        R_{l,l'}= R_{l,l'}^- + \frac{\epsilon_l \epsilon _{l'}}{N}. \label{R-}
    \end{align}
\begin{lemma} \label{pq value}
        We have: 
        $$\e \la \sigma_N^2 \ra_0 =p \quad \text{and} \quad \e \la \sigma_N \ra_0^2 =q,$$
        where $p$ and $q$ satisfy the equations \eqref{p} and \eqref{q}, and $\la \cdot \ra_0$ is the the average under the Gibbs measure with respect to the interpolated Hamiltonian $H_{t}(\boldsymbol \sigma)$ at $t=0$.
    \end{lemma}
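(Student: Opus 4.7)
The plan is to exploit the fact that the interpolating Hamiltonian at $t=0$ decouples the $N$-th spin from the rest. At $t=0$ we have
\[
H_0(\boldsymbol \sigma) = H_{N-1}(\boldsymbol \rho) + \sigma_N\left(\beta \sqrt{q}\, z + h\right) + \sigma_N^2\!\left(D + \tfrac{\beta^2}{2}(p-q)\right),
\]
which splits as a function of $\boldsymbol \rho$ plus a function of $\sigma_N$ alone (the random field $z$ is independent of the $g_{ij}$). Therefore the Gibbs average at $t=0$ factorizes, and $\la \sigma_N \ra_0$, $\la \sigma_N^2 \ra_0$ reduce to one-spin averages against the effective weights
\[
\exp\!\Big(\sigma_N u + \sigma_N^2\,\Delta\Big), \qquad u := \beta\sqrt{q}\,z + h,\quad \Delta := D + \tfrac{\beta^2}{2}(p-q).
\]

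The second step is to compute these one-spin averages explicitly. Since $\sigma_N \in \{0,\pm 1, \ldots, \pm S\}$, the single-spin partition function is
\[
1 + \sum_{\gamma=1}^{S} \bigl(e^{\gamma u} + e^{-\gamma u}\bigr)\exp(\gamma^2\Delta) = 1 + \sum_{\gamma=1}^{S} 2\ch(\gamma u)\exp(\gamma^2\Delta),
\]
while the numerators of $\la \sigma_N \ra_0$ and $\la \sigma_N^2 \ra_0$ are $\sum_{\gamma=1}^S 2\gamma\,\sh(\gamma u)\exp(\gamma^2\Delta)$ and $\sum_{\gamma=1}^S 2\gamma^2\,\ch(\gamma u)\exp(\gamma^2\Delta)$ respectively, by pairing $\gamma$ with $-\gamma$ in the sum over $\sigma_N$.

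The final step is to take expectation in $z$. Since $z$ is a standard Gaussian, it can be identified in distribution with the variable $X$ appearing in \eqref{p}--\eqref{q}. Comparing the resulting integrands directly with the right-hand sides of \eqref{p} and \eqref{q}, we obtain $\e \la \sigma_N^2 \ra_0 = p$ and $\e \la \sigma_N \ra_0^2 = q$, with the latter equality using that we square the ratio before integrating over $z$.

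The proof is essentially a bookkeeping computation once the decoupling at $t=0$ is recognized; there is no genuine obstacle. The only mild subtlety to flag is that \eqref{q} involves the square of the Gibbs expectation before the Gaussian integration, which matches $\e \la \sigma_N \ra_0^2$ (not $\e \la \sigma_N \ra_0$ squared), so one must be careful to square inside $\e$ when identifying the expressions.
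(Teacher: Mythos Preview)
Your proof is correct and follows essentially the same approach as the paper: both exploit the decoupling of the $N$-th spin at $t=0$ (the paper phrases this via Proposition~\ref{Expectation}, you phrase it as factorization of the Gibbs measure), reduce to a one-spin average, and identify the resulting expressions with the right-hand sides of \eqref{p} and \eqref{q}. Your remark about squaring inside the expectation for the $q$ equation is exactly the point the paper handles by first computing $\la \sigma_N \ra_0$ and then taking $\e \la \sigma_N \ra_0^2$.
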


    \begin{proof}
        Note that 
        $$H_0(\boldsymbol \sigma) = H_{N-1}(\boldsymbol \rho) + \sigma_N  \cdot \beta z \sqrt{q} + \frac{\beta^2}{2}(p-q) \sigma_N^2 + D \sigma_N^2 + h \sigma_N. $$
        Applying Proposition \ref{Expectation} with $H_0$, we get:
        \begin{align*}
            \e \la \sigma_N^2 \ra_0 &= \e  \frac{\Bla \Av\left( \sigma_N^2  \exp\left( \sigma_N  \cdot \beta z \sqrt{q} + \frac{\beta^2}{2}(p-q) \sigma_N^2 + D \sigma_N^2 +  h \sigma_N \right)\right)\Bra_{-}}{\Bla \Av\left(\exp\left( \sigma_N  \cdot \beta z \sqrt{q} + \frac{\beta^2}{2}(p-q) \sigma_N^2 + D \sigma_N^2 +  h \sigma_N \right)\right)\Bra_{-}}\\
            &=\e \left[\frac{ \sum_{\gamma=1}^{S} \gamma^2 \cdot 2 \ch\left[\gamma \left( \sqrt{q}\beta X+h\right)\right]\exp\left(\gamma^2 \left[ D+\frac{\beta^2}{2}(p-q)\right]\right)}{1+\sum_{\gamma=1}^{S} 2\ch\left[\gamma \left( \sqrt{q}\beta X+h\right)\right]\exp\left(\gamma^2 \left[ D+\frac{\beta^2}{2}(p-q)\right]\right)}\right] = p.
        \end{align*}
        Similarly, 
        \begin{align*}
            \la \sigma_N \ra_0 &=  \frac{\Bla \Av\left( \sigma_N  \exp\left( \sigma_N  \cdot \beta z \sqrt{q} + \frac{\beta^2}{2}(p-q) \sigma_N^2 + D \sigma_N^2 +  h \sigma_N\right)\right)\Bra_{-}}{\Bla \Av\left(\exp\left( \sigma_N  \cdot \beta z \sqrt{q} + \frac{\beta^2}{2}(p-q) \sigma_N^2 + D \sigma_N^2 + h \sigma_N \right)\right)\Bra_{-}}\\
            &= \left[\frac{ \sum_{\gamma=1}^{S} \gamma \cdot 2 \sh\left[\gamma \left( \sqrt{q}\beta X+h\right)\right]\exp\left(\gamma^2 \left[ D+\frac{\beta^2}{2}(p-q)\right]\right)}{1+\sum_{\gamma=1}^{S} 2\ch\left[\gamma \left( \sqrt{q}\beta X+h\right)\right]\exp\left(\gamma^2 \left[ D+\frac{\beta^2}{2}(p-q)\right]\right)}\right],\\
            \e  \la \sigma_N \ra_0^2 &= \left[\frac{ \sum_{\gamma=1}^{S} \gamma \cdot 2 \sh\left[\gamma \left( \sqrt{q}\beta X+h\right)\right]\exp\left(\gamma^2 \left[ D+\frac{\beta^2}{2}(p-q)\right]\right)}{1+\sum_{\gamma=1}^{S} 2\ch\left[\gamma \left( \sqrt{q}\beta X+h\right)\right]\exp\left(\gamma^2 \left[ D+\frac{\beta^2}{2}(p-q)\right]\right)}\right]^2 = q.
 \end{align*}\end{proof}
Let $$u_{\boldsymbol \sigma} = \frac{\beta}{\sqrt{N}}\sum_{ i< N}g_{iN} \sigma_i\sigma_N, \, \,  v_{\boldsymbol \sigma} = \beta z \sqrt{q}\sigma_N,\,\, y_{\boldsymbol \sigma} = \frac{\beta^2}{2}(p-q) \sigma_N^2, \, \,\text{and} \, \, \omega_{\boldsymbol \sigma} = \exp\left(H_{N-1}(\boldsymbol \rho)+ D \sigma_N^2 + h \sigma_N\right). $$
    Then the interpolated Hamiltonian can be written as 
    $$H_t({\boldsymbol \sigma})=\sqrt{t}u_{\boldsymbol \sigma} + \sqrt{1-t}v_{\boldsymbol \sigma} + (1-t) y_{\boldsymbol \sigma} + \log(\omega_{\boldsymbol \sigma}).$$
   Let $\la \cdot \ra_t$ be an average for the corresponding Gibbs measure. We write 
   $$\nu_t(f)=\e \la f \ra_t,\,\, \text{and} \, \, \nu'_t(f)= \frac{d}{dt}(\nu_t(f)).$$
   Then $\e  \left \la \left(R_{1,2}-q \right)^2 \right \ra$ and $ \e \left \la \left( R_{1,1}-p  \right)^2 \right \ra$ in Proposition \ref{overlap} are equal to $\nu_1\left(\left(R_{1,2}-q\right)^2\right)$ and $ \nu_1\left(\left(R_{1,1}-p\right)^2\right)$. Set 
   $$U(\boldsymbol \sigma^l, \boldsymbol \sigma^{l'}) = \frac{1}{2}\left( \e u_{\boldsymbol \sigma^l}u_{\boldsymbol \sigma^{l'}} - \e v_{\boldsymbol \sigma^l}v_{\boldsymbol \sigma^{l'}} \right),\,\, \text{and} \, \, V(\boldsymbol \sigma^l) = -y_{\boldsymbol \sigma^l}. $$
   Then we have
   $$ \e u_{\boldsymbol \sigma^l}u_{\boldsymbol \sigma^{l'}} = \epsilon_l \epsilon_{l'} \cdot \beta^2 R_{l,l'}^- , \quad \e v_{\boldsymbol \sigma^l}v_{\boldsymbol \sigma^{l'}} =  \epsilon_l \epsilon_{l'} \cdot \beta^2q,$$
   and
   \begin{align}
        U(\boldsymbol \sigma^l, \boldsymbol \sigma^{l'}) &=  \epsilon_l \epsilon_{l'} \cdot \frac{\beta^2}{2}(R_{l,l'}^- -q), \label{U}\\
       V(\boldsymbol \sigma^l)  &= -\frac{\beta^2}{2}(p-q) \epsilon_l^2. \label{V}  
   \end{align}

    \begin{lemma}\label{v'} 
        If $f$ is a function on $(\Sigma_N)^n$, then 
        \begin{align}
            \nu_t'\left(f\left(\boldsymbol \sigma^1, \ldots, \boldsymbol \sigma^n\right)\right) &= \sum_{1 \le l, l' \le n} \nu_t \left(U\left(\boldsymbol \sigma^l, \boldsymbol \sigma^{l'}\right) f\right) - 2n \sum_{l \le n} \nu_t \left(U\left(\boldsymbol \sigma^l, \boldsymbol \sigma^{n+1}\right) f\right) \notag\\ 
             & \phantom{=}\ - n  \nu_t \left(U\left(\boldsymbol \sigma^{n+1}, \boldsymbol \sigma^{n+1}\right) f\right) + n(n+1) \nu_t \left(U\left(\boldsymbol \sigma^{n+1}, \boldsymbol \sigma^{n+2}\right) f\right) \notag\\
             & \phantom{=}\ + \sum_{l \le n} \nu_t \left(V\left(\boldsymbol \sigma^l\right) f\right) - n\nu_t \left(V\left(\boldsymbol \sigma^{n+1}\right) f\right) .  \label{v'(f)}
        \end{align}
    \end{lemma}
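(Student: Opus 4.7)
The proof is a direct Gaussian integration by parts (IBP) computation, following Proposition 1.6.3 of Talagrand's book \cite{talagrand_mean_2011}; the only genuinely new feature is that the interpolated Hamiltonian carries an additional deterministic drift $(1-t)y_{\boldsymbol{\sigma}}$, which will produce the $V$ terms. I would first write
\[
\dot H_t(\boldsymbol{\sigma}) = \frac{u_{\boldsymbol{\sigma}}}{2\sqrt{t}} - \frac{v_{\boldsymbol{\sigma}}}{2\sqrt{1-t}} - y_{\boldsymbol{\sigma}},
\]
and then differentiate the replicated Gibbs average to obtain
\[
\nu_t'(f) = \sum_{l\leq n}\nu_t\bigl(f\,\dot H_t(\boldsymbol{\sigma}^l)\bigr) - n\,\nu_t\bigl(f\,\dot H_t(\boldsymbol{\sigma}^{n+1})\bigr),
\]
where the second group of terms comes from differentiating the normalization and introduces the extra replica $\boldsymbol{\sigma}^{n+1}$. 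Since $V(\boldsymbol{\sigma}) = -y_{\boldsymbol{\sigma}}$ is deterministic, the $y$ piece of $\dot H_t$ contributes $\sum_{l\leq n}\nu_t(V(\boldsymbol{\sigma}^l)f) - n\,\nu_t(V(\boldsymbol{\sigma}^{n+1})f)$ directly, which matches the last line of \eqref{v'(f)} without any further work.

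For the Gaussian pieces I would apply IBP in the variables $g_{iN}$ ($i<N$) and $z$, using $\mathbb{E}[gF] = \mathbb{E}[\partial_g F]$ when $g$ is a standard Gaussian. Differentiating the Gibbs weight in $g_{iN}$ brings down a factor $\tfrac{\sqrt{t}\beta}{\sqrt{N}}\sigma_i^{l'}\sigma_N^{l'}$ for each replica $l'$ appearing in the average, and differentiating in $z$ brings down $\sqrt{1-t}\beta\sqrt{q}\,\sigma_N^{l'}$. Summing $i$ from $1$ to $N-1$ reconstructs the covariances $\mathbb{E}[u_{\boldsymbol{\sigma}^l}u_{\boldsymbol{\sigma}^{l'}}] = \beta^2\epsilon_l\epsilon_{l'}R_{l,l'}^-$ and $\mathbb{E}[v_{\boldsymbol{\sigma}^l}v_{\boldsymbol{\sigma}^{l'}}] = \beta^2\epsilon_l\epsilon_{l'}q$, and the $\sqrt{t}$ and $\sqrt{1-t}$ prefactors cancel the explicit $\tfrac{1}{2\sqrt{t}}$ and $\tfrac{1}{2\sqrt{1-t}}$ in $\dot H_t$. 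By definition $\tfrac{1}{2}(\mathbb{E}u_{\boldsymbol{\sigma}^l}u_{\boldsymbol{\sigma}^{l'}} - \mathbb{E}v_{\boldsymbol{\sigma}^l}v_{\boldsymbol{\sigma}^{l'}}) = U(\boldsymbol{\sigma}^l,\boldsymbol{\sigma}^{l'})$, so the $u$ and $v$ contributions pair up exactly.

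It remains to collect the combinatorial coefficients. For each fixed $l\leq n$, IBP on $\nu_t(f u_{\boldsymbol{\sigma}^l})$ produces one term $\nu_t(U(\boldsymbol{\sigma}^l,\boldsymbol{\sigma}^{l'})f)$ for each $l'\leq n$ (including $l'=l$), plus a correction $-n\,\nu_t(U(\boldsymbol{\sigma}^l,\boldsymbol{\sigma}^{n+1})f)$ from differentiating the partition function. Summing over $l\leq n$ yields the $\sum_{1\leq l,l'\leq n}$ contribution and half of the $-2n\sum_{l}$ contribution. Applying the same procedure to $-n\,\nu_t(f u_{\boldsymbol{\sigma}^{n+1}})$ supplies the remaining half of $-2n\sum_{l\leq n}\nu_t(U(\boldsymbol{\sigma}^l,\boldsymbol{\sigma}^{n+1})f)$, the diagonal $-n\,\nu_t(U(\boldsymbol{\sigma}^{n+1},\boldsymbol{\sigma}^{n+1})f)$, and the $+n(n+1)\,\nu_t(U(\boldsymbol{\sigma}^{n+1},\boldsymbol{\sigma}^{n+2})f)$ term coming from the second differentiation of $Z_N(t)$ and the newly introduced replica $\boldsymbol{\sigma}^{n+2}$. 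The main obstacle is not conceptual but combinatorial: one must carefully track the signs and the coefficients $(1,-2n,-n,n(n+1))$, and verify that the $1/\sqrt{t}$ and $1/\sqrt{1-t}$ singularities in $\dot H_t$ cancel after IBP so that the identity is well-defined for $t\in(0,1)$.
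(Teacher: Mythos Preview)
Your proposal is correct and follows exactly the approach the paper intends: the paper's own proof consists of the single sentence ``The proof is similar to Lemma 1.4.2 in \cite{talagrand_mean_2011} with the correct derivatives,'' and what you have written is precisely that argument spelled out, including the handling of the extra deterministic drift $(1-t)y_{\boldsymbol\sigma}$ that yields the $V$ terms. Your combinatorial bookkeeping of the coefficients $(1,-2n,-n,n(n+1))$ is accurate; note only that the paper points to Lemma~1.4.2 rather than Proposition~1.6.3 of Talagrand, but the underlying Gaussian integration-by-parts computation is the same.
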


    \begin{proof}
        The proof is similar to  Lemma 1.4.2 in \cite{talagrand_mean_2011} with the correct derivatives.
    \end{proof}

    If we apply Lemma \ref{v'} to $H_t(\boldsymbol{ \sigma})$ and corresponding $U(\boldsymbol \sigma^l, \boldsymbol \sigma^{l'})$ and $V(\boldsymbol \sigma^l )$, we get the following.

    \begin{lemma}
        Let $f$ be a function on $(\Sigma_N)^n$, then for $0 < t < 1$, we have
        \begin{align}
            \nu'_t(f) &= \beta^2 \Bigl[ \sum_{1 \le l <l' \le n} \nu_t \left(\epsilon_{l}\epsilon_{l'}\left(R^-_{l,l'}-q\right)f\right) - n \sum_{l \le n} \nu_t \left(\epsilon_{l}\epsilon_{n+1}\left(R^-_{l,n+1}-q\right) f\right) \notag\\ 
            &\phantom{===}\ + \frac{n(n+1)}{2} \nu_t \left(\epsilon_{n+1}\epsilon_{n+2}\left(R^-_{n+1,n+2}-q\right) f\right) + \frac{1}{2} \sum_{l \le n} \nu_t \left(\epsilon_l^2 \left(R^-_{l,l} - p\right)f \right) \notag\\
            &\phantom{===}\ - \frac{n}{2}  \nu_t \left(\epsilon_{n+1}^2 \left(R^-_{n+1,n+1} - p\right) f\right),\label{nu'(f)} 
        \end{align}
        and also 
        \begin{align}
            \nu'_t(f) = A_1 + A_2 - B ,  \label{v'R} 
        \end{align}
        where 
        \begin{align}
            A_1 &:= \beta^2 \Bigl[ \sum_{1 \le l <l' \le n} \nu_t \left(\epsilon_{l}\epsilon_{l'}\left(R_{l,l'}-q\right)f\right) - n \sum_{l \le n} \nu_t \left(\epsilon_{l}\epsilon_{n+1}\left(R_{l,n+1}-q\right) f\right) \notag\\ 
            & \phantom{===}\ + \frac{n(n+1)}{2} \nu_t \left(\epsilon_{n+1}\epsilon_{n+2}\left(R_{n+1,n+2}-q\right) f\right) \Bigr], \label{A1}\\ 
            A_2 &:= \beta^2 \Bigl[\frac{1}{2} \sum_{l \le n} \nu_t \left(\epsilon_l^2 \left(R_{l,l} - p\right)f \right) - \frac{n}{2}  \nu_t \left(\epsilon_{n+1}^2 \left(R_{n+1,n+1} - p\right) f\right) \Bigr], \label{A2}\\ 
            B &:= \frac{\beta^2}{N}  \Bigl[ \sum_{1 \le l <l' \le n} \nu_t \left(\epsilon_{l}^2\epsilon_{l'}^2 f\right) - n \sum_{l \le n} \nu_t \left(\epsilon_{l}^2\epsilon_{n+1}^2 f\right)+ \frac{n(n+1)}{2} \nu_t \left(\epsilon_{n+1}^2\epsilon_{n+2}^2f\right) \notag\\ 
            & \phantom{===}\ + \frac{1}{2} \sum_{l \le n} \nu_t \left(\epsilon_l^4 f \right)  - \frac{n}{2}  \nu_t \left(\epsilon_{n+1}^4  f\right) \Bigr]. \label{B} 
        \end{align}
    \end{lemma}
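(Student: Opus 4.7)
The plan is to prove the two identities \eqref{nu'(f)} and \eqref{v'R} by direct substitution into the abstract formula of Lemma \ref{v'}, followed by careful bookkeeping. The only ingredients are the explicit expressions for $U$ and $V$ in \eqref{U}--\eqref{V} and the relation $R_{l,l'}^{-} = R_{l,l'} - \epsilon_l \epsilon_{l'}/N$ from \eqref{R-}.

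For \eqref{nu'(f)}, I would first split the double sum $\sum_{1\le l,l'\le n} \nu_t(U(\boldsymbol{\sigma}^l, \boldsymbol{\sigma}^{l'}) f)$ appearing in Lemma \ref{v'} into its off-diagonal part, which by the symmetry of $U$ equals $2\sum_{1 \le l < l' \le n} \nu_t(U(\boldsymbol{\sigma}^l, \boldsymbol{\sigma}^{l'}) f)$, and its diagonal part $\sum_{l\le n} \nu_t(U(\boldsymbol{\sigma}^l, \boldsymbol{\sigma}^l) f)$. Plugging in $U(\boldsymbol{\sigma}^l, \boldsymbol{\sigma}^l) = \frac{\beta^2}{2}\epsilon_l^2 (R_{l,l}^{-} - q)$ and $V(\boldsymbol{\sigma}^l) = -\frac{\beta^2}{2}(p-q)\epsilon_l^2$, the diagonal $U$-contribution and the single sum of $V$-contributions combine pointwise via
$$ U(\boldsymbol{\sigma}^l, \boldsymbol{\sigma}^l) + V(\boldsymbol{\sigma}^l) = \frac{\beta^2}{2}\epsilon_l^2\bigl[(R_{l,l}^{-} - q) - (p-q)\bigr] = \frac{\beta^2}{2}\epsilon_l^2(R_{l,l}^{-} - p), $$
which accounts for the $\sum_{l\le n}$ term on the second line of \eqref{nu'(f)}. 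Exactly the same cancellation happens for the replica index $n+1$, carrying the prefactor $-n$, and produces the last line. The remaining (off-diagonal) $U$-terms in Lemma \ref{v'} survive unchanged and, once the explicit form of $U$ is inserted, match the first line of \eqref{nu'(f)} along with the $\epsilon_l\epsilon_{n+1}(R_{l,n+1}^{-}-q)$ and $\epsilon_{n+1}\epsilon_{n+2}(R_{n+1,n+2}^{-}-q)$ pieces.

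For \eqref{v'R}, I would substitute $R_{l,l'}^{-} = R_{l,l'} - \epsilon_l \epsilon_{l'}/N$ into each term of \eqref{nu'(f)}. Each factor $\epsilon_l \epsilon_{l'}(R_{l,l'}^{-} - q)$ splits cleanly as $\epsilon_l \epsilon_{l'}(R_{l,l'} - q) - \epsilon_l^2 \epsilon_{l'}^2/N$, and each factor $\epsilon_l^2 (R_{l,l}^{-} - p)$ splits as $\epsilon_l^2(R_{l,l} - p) - \epsilon_l^4/N$. Collecting the pieces that retain $R_{l,l'}$ reproduces $A_1 + A_2$ exactly as defined in \eqref{A1}--\eqref{A2}, while the five remaining $1/N$-corrections, each inheriting a minus sign from the substitution, assemble into $-B$ as in \eqref{B}.

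The derivation is pure bookkeeping, so I do not anticipate any genuine obstacle. The two points that require care are (i) recognising that the $V$-term in Lemma \ref{v'} is exactly what converts the diagonal $U$-contribution $R_{l,l}^{-} - q$ into the $R_{l,l}^{-} - p$ appearing in \eqref{nu'(f)}, and (ii) keeping track of the signs of the five separate $1/N$-corrections when passing from $R_{l,l'}^{-}$ to $R_{l,l'}$, so that they aggregate with the prescribed signs into the combined error $B$.
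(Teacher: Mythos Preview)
Your proposal is correct and follows essentially the same approach as the paper, which simply states that the result follows by substituting \eqref{U} and \eqref{V} into \eqref{v'(f)} together with straightforward algebra. Your write-up spells out exactly that algebra, including the key observation that the diagonal $U$-terms combine with the $V$-terms to convert $R_{l,l}^{-}-q$ into $R_{l,l}^{-}-p$.
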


    \begin{proof}
       The result follows by replacing equation \eqref{U}  and \eqref{V} in \eqref{v'(f)} and by some straightforward algebra.
    \end{proof}

    \begin{lemma}\label{ineq_nu_t and 1}
        For a function $f \geq 0$ on $(\Sigma_N)^n$, we have 
        \begin{align}
            \nu_t(f) \le \exp\left(6 n^2\beta^2 S^4 \right) \nu_1(f). \notag
        \end{align}
    \end{lemma}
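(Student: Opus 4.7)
The plan is a Grönwall argument on $\nu_t(f)$ driven by the derivative identity \eqref{nu'(f)}: bound $|\nu_t'(f)|$ by a constant multiple of $\nu_t(f)$ uniformly in $t \in (0,1)$, then integrate the resulting differential inequality on $[t,1]$.

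First I will use the deterministic bounds $|\sigma_i| \le S$, which give $|\epsilon_l| \le S$, $|R^-_{l,l'}| \le S^2$, and $R^-_{l,l} \le S^2$, together with $0 \le q, p \le S^2$ (both inherited from Lemma \ref{pq value} since $\sigma_N^2 \le S^2$), to produce the pointwise estimates
\[
\bigl|\epsilon_l \epsilon_{l'}(R^-_{l,l'} - q)\bigr| \le 2 S^4, \qquad \bigl|\epsilon_l^2 (R^-_{l,l} - p)\bigr| \le 2 S^4.
\]
Since $f \ge 0$, each summand on the right of \eqref{nu'(f)} is then controlled by $2 S^4 \beta^2 \, \nu_t(f)$ times its combinatorial multiplicity. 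Summing the multiplicities,
\[
\binom{n}{2} + n\cdot n + \frac{n(n+1)}{2} + \frac{n}{2} + \frac{n}{2} = 2n^2 + n \le 3 n^2 \qquad (n \ge 1),
\]
gives the uniform bound
\[
|\nu_t'(f)| \le 6 n^2 \beta^2 S^4 \, \nu_t(f).
\]

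To conclude, on the event $\nu_t(f) > 0$ for all $t \in [0,1]$, this estimate yields $\bigl|\frac{d}{dt} \log \nu_t(f)\bigr| \le 6 n^2 \beta^2 S^4$; integrating from $t$ to $1$ gives $\log \nu_t(f) - \log \nu_1(f) \le 6 n^2 \beta^2 S^4$, i.e.\ the desired inequality. The degenerate case is handled by replacing $f$ with $f + \varepsilon$ for $\varepsilon > 0$, running the same argument, and passing to the limit $\varepsilon \downarrow 0$.

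The only delicate point is combinatorial bookkeeping. Because the spins here take values in $\{0, \pm 1, \ldots, \pm S\}$ rather than in $\{\pm 1\}$, the derivative formula \eqref{nu'(f)} contains two families of centered terms: those involving the overlap centering $R^-_{l,l'} - q$ (from $U$) and those involving the self-overlap centering $R^-_{l,l} - p$ (from $V$). Fortunately, both families are dominated by the same uniform bound $2 S^4$, so the argument follows the template of Lemma~1.6.3 in \cite{talagrand_mean_2011} without substantive modification, and no genuine analytic obstacle appears.
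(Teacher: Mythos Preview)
Your proof is correct and follows essentially the same approach as the paper: bound each term in the derivative formula \eqref{nu'(f)} by $2S^4\beta^2\nu_t(f)$, sum the multiplicities to get $|\nu_t'(f)|\le 6n^2\beta^2 S^4\,\nu_t(f)$, and integrate the resulting logarithmic differential inequality on $[t,1]$. Your explicit treatment of the degenerate case $\nu_t(f)=0$ via the $f+\varepsilon$ approximation is a minor robustness addition that the paper omits, but otherwise the arguments coincide.
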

   
    \begin{proof}
        Note that $R^-_{l.l'} \le S^2$ and $p,q \in [0,S^2]$. So $\lvert R^-_{l.l'} - q \rvert \le 2S^2 $ for $l \neq l'$, $\lvert R^-_{l.l} - p \rvert \le 2S^2 $, and $(p-q)^2 \le S^2$. Thus, by \eqref{nu'(f)}, we have
        \begin{align}
           \lvert \nu'_t(f)\rvert  &\le \left( \frac{n(n-1)}{2} + n^2 + \frac{n(n+1)}{2} + \frac{n}{2} + \frac{n}{2}\right) \beta^2 \cdot 2S^4\nu_t(f) \notag\\
            &= (2n^2 + n) \beta^2 \cdot 2S^4\nu_t(f) \le 6n^2\beta^2S^4\nu_t(f). \label{|nu'(f)|} 
        \end{align}
        Thus,
        $$ \left \lvert \frac{\nu'_t(f)}{\nu_t(f)} \right \rvert \le 6n^2\beta ^2 S^4.  $$
        Let
        $$ g(1-t):= \log\left[v_t(f)\right]\quad \text{for} \quad t\in [0,1].$$
        Then
        $$\left \lvert g'(1-t) \right \rvert = \left \lvert \frac{\nu'_t(f)}{\nu_t(f)}\right \rvert \le 6n^2\beta ^2S^4.$$
        Therefore,
        $$ g(1-t) = g(0) + \int_0^{1-t} g'(s)ds \le g(0) + \int_0^{1-t} \lvert g'(s) \rvert ds \le g(0) + (1-t) 6n^2\beta ^2 S^4,$$
        i.e.
        $$\log[\nu_t(f)] \le \log[\nu_1(f)]+ (1-t) 6n^2\beta ^2 S^4. $$
        Hence,
        $$\nu_t(f) \le \exp\left[(1-t)6n^2\beta^2S^4\right] \nu_1(f) \le \exp \left(6n^2\beta^2S^4\right) \nu_1(f), \, \, \text{as desired.} $$
    \end{proof}

    \begin{lemma} \label{difference nu_1&0}
       Given a function $ f$ on $(\Sigma_N)^n$, and $\tau_1, \tau_2 >0$ with $\frac{1}{\tau_1} + \frac{1}{\tau_2} = 1$, we have 
       \begin{align}
          \left \lvert \nu_1(f) - \nu_0(f) \right \rvert \le \exp \left[6n^2\beta^2S^4\right] &\biggl(  2n^2 \beta^2S^2\left[\nu_1(|f|^{\tau_1})\right]^{\frac{1}{\tau_1}}\left[\nu_1(\lvert R_{12} - q \rvert^{\tau_2})\right]^{\frac{1}{\tau_2}} \notag\\
         & \phantom{===}\ + n^2 \beta^2 S^4 \left(2 + \frac{3}{N} \right)\nu_1(|f|) \biggr), \label{difference nu_1&0_R12}
       \end{align}
       and
       \begin{align}
        \left \lvert \nu_1(f) - \nu_0(f) \right \rvert \le \exp \left[6n^2\beta^2S^4\right] &\biggl(  2n^2 \beta^2S^2\left[\nu_1(|f|^{\tau_1})\right]^{\frac{1}{\tau_1}}\left[\nu_1(\lvert R_{11} - p \rvert^{\tau_2})\right]^{\frac{1}{\tau_2}} \notag\\
       & \phantom{===}\ + n^2 \beta^2 S^4 \left(4 + \frac{3}{N} \right)\nu_1(|f|) \biggr). \label{difference nu_1&0_R11}
     \end{align}
    \end{lemma}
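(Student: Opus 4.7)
The plan is to start from the fundamental theorem of calculus, writing
\[
\nu_1(f)-\nu_0(f)=\int_0^1 \nu'_t(f)\,dt,
\]
and then apply the decomposition \eqref{v'R} to the integrand: $\nu'_t(f)=A_1+A_2-B$. The idea is to bound each of $A_1$, $A_2$, $B$ uniformly in $t\in(0,1)$, integrate, and then replace $\nu_t$ by $\nu_1$ via Lemma \ref{ineq_nu_t and 1}.

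For \eqref{difference nu_1&0_R12}, I would bound $A_1$ using H\"older's inequality with conjugate exponents $\tau_1,\tau_2$. A generic summand in \eqref{A1} is $\nu_t(\epsilon_l\epsilon_{l'}(R_{l,l'}-q)f)$; using $|\epsilon_l\epsilon_{l'}|\le S^2$ together with replica exchangeability, which makes $\nu_t(|R_{l,l'}-q|^{\tau_2})=\nu_t(|R_{1,2}-q|^{\tau_2})$ for any pair of distinct replicas, gives
\[
|\nu_t(\epsilon_l\epsilon_{l'}(R_{l,l'}-q)f)|\le S^2\,\nu_t(|f|^{\tau_1})^{1/\tau_1}\,\nu_t(|R_{1,2}-q|^{\tau_2})^{1/\tau_2}.
\]
The coefficients in \eqref{A1} sum to $n(n-1)/2+n^2+n(n+1)/2=2n^2$, producing the prefactor $2n^2\beta^2 S^2$. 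For $A_2$, I would use the trivial pointwise bound $|\epsilon_l^2(R_{l,l}-p)|\le 2S^4$, yielding $|A_2|\le 2n\beta^2 S^4\,\nu_t(|f|)\le 2n^2\beta^2 S^4\,\nu_t(|f|)$; and for $B$, the pointwise estimate $|\epsilon|^{2k}\le S^{2k}$ combined with the explicit $1/N$ factor gives $|B|\le 3n^2\beta^2 S^4 N^{-1}\,\nu_t(|f|)$. These two non-H\"older contributions add up to $n^2\beta^2 S^4(2+3/N)\,\nu_t(|f|)$. Inequality \eqref{difference nu_1&0_R11} is handled symmetrically: $A_2$ is bounded by H\"older (producing the $\nu_t(|R_{1,1}-p|^{\tau_2})$ factor), while $A_1$ is bounded via $|R_{l,l'}-q|\le 2S^2$ to give $4n^2\beta^2 S^4\,\nu_t(|f|)$, which together with the $B$-estimate yields the constant $(4+3/N)$.

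Once a uniform-in-$t$ bound on $|\nu'_t(f)|$ is in hand, integration over $t\in[0,1]$ and Lemma \ref{ineq_nu_t and 1} applied to each non-negative object ($|f|^{\tau_1}$, $|f|$, $|R_{1,2}-q|^{\tau_2}$, $|R_{1,1}-p|^{\tau_2}$) produces the uniform prefactor $\exp(6n^2\beta^2 S^4)$. The one subtlety to watch is that the derivative formula introduces the extra replicas $n+1$ and $n+2$, so Lemma \ref{ineq_nu_t and 1} should technically be invoked with $n+2$ in place of $n$; I would either absorb the loss into a slightly enlarged constant inside the exponential, or, to keep the stated exponent intact, first integrate out the additional replicas using the product structure of the replicated Gibbs measure before applying the lemma. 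This replica-counting issue is the only nontrivial point in the argument; the remainder is routine bookkeeping.
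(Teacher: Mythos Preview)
Your proposal is correct and matches the paper's proof essentially line by line: the paper also bounds $|\nu_1(f)-\nu_0(f)|$ by $\sup_t|\nu'_t(f)|$, splits via $\nu'_t(f)=A_1+A_2-B$, applies H\"older to $A_1$ (respectively $A_2$) for \eqref{difference nu_1&0_R12} (respectively \eqref{difference nu_1&0_R11}) while bounding the remaining pieces pointwise, and then invokes Lemma~\ref{ineq_nu_t and 1}. Your replica-counting worry is harmless: by replica symmetry $\nu_t(|R_{l,l'}-q|^{\tau_2})=\nu_t(|R_{1,2}-q|^{\tau_2})$, and each factor appearing after H\"older is a function of at most $n$ replicas (the overlap factor of at most two), so Lemma~\ref{ineq_nu_t and 1} applies with the stated exponent $6n^2\beta^2 S^4$ without any adjustment; the paper does not comment on this point at all.
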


    \begin{proof}
        We show \eqref{difference nu_1&0_R12} first. Note that 
        $$\left \lvert \nu_1(f) - \nu_0(f) \right \rvert = \left \lvert \int^1_0 \nu'_t(f) dt \right \rvert \le \sup_{0 \le t \le 1} \left \lvert \nu'_t(f) \right \rvert. $$
        Also know that $\lvert \epsilon_{l}\epsilon_{l'} \rvert \le S^2$ and $(p-q)^2 \le S^2$.
        Now apply H\"{o}lder's inequalitiy, we will have for $\frac{1}{\tau_1} + \frac{1}{\tau_2} = 1,$ and $l \neq l'$,
        \begin{align}
           \left \lvert  \nu_t \left(\epsilon_{l}\epsilon_{l'}\left(R_{l,l'}-q\right)f\right) \right \rvert \le S^2\nu_t\left(|f||R_{l,l'} - q|\right) \le S^2\left[\nu_t(|f|^{\tau_1})\right]^{\frac{1}{\tau_1}}\left[\nu_t(\lvert R_{12} - q \rvert^{\tau_2})\right]^{\frac{1}{\tau_2}}.  \notag
        \end{align}
        Also we have
        $$\nu_t\left(\epsilon^2_{l}\epsilon^2_{l'}f\right) \le S^4\nu_t (|f|).$$
        According to equations \eqref{A1} to \eqref{B}, 
        \begin{align}
            |A_1| &\le \left( \frac{n(n-1)}{2} + n^2 + \frac{n(n+1)}{2} \right) \beta^2S^2 \left[\nu_t(|f|^{\tau_1})\right]^{\frac{1}{\tau_1}}\left[\nu_t(\lvert R_{12} - q \rvert^{\tau_2})\right]^{\frac{1}{\tau_2}} \notag\\
            &= 2n^2\beta^2 S^2 \left[\nu_t(|f|^{\tau_1})\right]^{\frac{1}{\tau_1}}\left[\nu_t (\lvert R_{12} - q \rvert^{\tau_2})\right]^{\frac{1}{\tau_2}}, \notag\\
            |A_2| &\le \beta^2 \left[ \frac{n}{2}\cdot 2S^4\nu_t(|f|) + \frac{n}{2}\cdot 2S^4\nu_t(|f|)\right] =2\beta^2n S^4\nu_t(|f|) , \notag\\
            |B| &\le \frac{\beta^2}{N} \left( \frac{n(n-1)}{2} + n^2 + \frac{n(n+1)}{2} + \frac{n}{2} + \frac{n}{2}\right) S^4 \nu_t(|f|) \notag\\
            &= \frac{\beta^2}{N} \left(2n^2 + n\right)S^4 \nu_t(|f|) \le \frac{3\beta^2n^2S^4}{N}\nu_t(|f|). \notag
        \end{align}
        Hence based on equation \eqref{v'R}, we have
        \begin{align}
            |\nu'_t(f)| &\le |A_1| + |A_2| + |B| \notag\\
            &\le 2n^2\beta^2 S^2\left[\nu_t(|f|^{\tau_1})\right]^{\frac{1}{\tau_1}}\left[\nu_t(\lvert R_{12} - q \rvert^{\tau_2})\right]^{\frac{1}{\tau_2}} + n^2\beta^2 S^4\left(2 + \frac{3}{N}   \right) \nu_t(|f|) \notag.
        \end{align}
        Now by Lemma \ref{ineq_nu_t and 1}, we get
        \begin{align}
            |\nu'_t(f)|  \le \exp \left[6n^2\beta^2S^4\right] &\biggl(  2n^2 \beta^2S^2\left[\nu_1(|f|^{\tau_1})\right]^{\frac{1}{\tau_1}}\left[\nu_1(\lvert R_{12} - q \rvert^{\tau_2})\right]^{\frac{1}{\tau_2}} \notag\\
         & \phantom{===}\ + n^2 \beta^2 S^4 \left(2 + \frac{3}{N} \right)\nu_1(|f|) \biggr). \notag
        \end{align}
        Therefore, 
        \begin{align}
            \left \lvert \nu_1(f) - \nu_0(f) \right \rvert \le \sup_{0 \le t \le 1} \left \lvert \nu'_t(f) \right \rvert \le \exp \left[6n^2\beta^2S^4\right] &\biggl(  2n^2 \beta^2S^2 \left[\nu_1(|f|^{\tau_1})\right]^{\frac{1}{\tau_1}}\left[\nu_1(\lvert R_{12} - q \rvert^{\tau_2})\right]^{\frac{1}{\tau_2}} \notag\\
            & \phantom{==}\ + n^2 \beta^2S^4 \left(2 + \frac{3}{N} \right)\nu_1(|f|) \biggr). \notag
        \end{align}
        Now we show the inequality \eqref{difference nu_1&0_R11}. Note that we have for $\frac{1}{\tau_1} + \frac{1}{\tau_2} = 1,$ and $l =l'$,
        \begin{align}
            \left \lvert  \nu_t \left(\epsilon_{l}\epsilon_{l'}\left(R_{l,l}-p\right)f\right) \right \rvert \le S^2\left[\nu_t(|f|^{\tau_1})\right]^{\frac{1}{\tau_1}}\left[\nu_t(\lvert R_{11} - p \rvert^{\tau_2})\right]^{\frac{1}{\tau_2}}.  \notag
         \end{align}
         Considering different upper bounds for $|A_1|$ and $|A_2|$, we obtain
         \begin{align*}
            |A_1| &\le \left( \frac{n(n-1)}{2} + n^2 + \frac{n(n+1)}{2} \right) \beta^2S^2 \cdot 2S^2 \nu_t \left(|f| \right) = 4n^2\beta^2 S^4  \nu_t \left(|f| \right), \notag\\
            |A_2| &\le \left( \frac{2}{n} + \frac{2}{n} \right)\beta^2S^2 \left[\nu_t(|f|^{\tau_1})\right]^{\frac{1}{\tau_1}}\left[\nu_t(\lvert R_{11} - p \rvert^{\tau_2})\right]^{\frac{1}{\tau_2}}  \le n^2\beta^2S^2 \left[\nu_t(|f|^{\tau_1})\right]^{\frac{1}{\tau_1}}\left[\nu_t(\lvert R_{11} - p \rvert^{\tau_2})\right]^{\frac{1}{\tau_2}}. \notag
         \end{align*}
         Following the same method as above, we show \eqref{difference nu_1&0_R11} as desired. 
    \end{proof}

   We now prove Proposition \ref{overlap}.
    
    \begin{proof}[\bf Proof of Proposition \ref{overlap}]
        We will show concentration of $R_{1,2}$ first. Recall that $\epsilon_l = \sigma_N^l$. Using symmetry among replicas, we can write
        $$\nu_1\left(\left(R_{1,2}-q\right)^2\right) = \frac{1}{N} \sum_{i \le N} \nu_1\left[\left(\sigma_i^1 \sigma_i^2 -  q\right)\left(R_{1,2} -  q\right)\right] = \nu_1(f),$$
        where
        $$f := \left(\epsilon_1\epsilon_2 - q\right) \left(R_{1,2} - q\right).$$
        By \eqref{R-},
        $$f = \left(\epsilon_1 \epsilon_2 - q\right) \left(\frac{\epsilon_1 \epsilon _{2}}{N} + R^-_{1,2} - q\right) = \frac{1}{N}\left[\left(\epsilon_1\epsilon_2\right)^2 - \epsilon_1\epsilon_2 q\right] +  \left(\epsilon_1\epsilon_2 - q\right)\left(R^-_{1,2} -  q\right).$$
        Lemma \ref{pq value} implies
        $$\nu_0 \left[\left(\epsilon_1\epsilon_2 - q\right)\left(R^-_{1,2} -  q\right)\right] =  \nu_0 \left(\epsilon_1\epsilon_2 - q\right) \nu_0\left(R^-_{1,2} -  q\right) = \left[ \nu_0(\epsilon_1)\nu_0(\epsilon_2) - q\right]\nu_0\left(R^-_{1,2} -  q\right) = 0,$$
        and hence 
       \begin{align}
            \nu_0(f)=  \frac{1}{N} \nu_0\left[\left(\epsilon_1\epsilon_2\right)^2 - \epsilon_1\epsilon_2 q\right] =  \frac{1}{N} \left[\nu_0\left(\epsilon^2_1 \right)\nu_0\left(\epsilon^2_2 \right) - \nu_0\left(\epsilon_1^2 \right)q\right] = \frac{1}{N}(p^2 - q^2). \label{nu0f}
       \end{align}
        Using $\left \lvert \epsilon_1\epsilon_2 - q \right \rvert \le 2S^2$, we have 
        $$\left \lvert f \right \rvert = \left \lvert  \left(\epsilon_1\epsilon_2 - q\right) \left(R_{1,2} - q\right)  \right \rvert \le 2 S^2\left \lvert R_{1,2} - q \right \rvert.$$
        Now apply Lemma \ref{difference nu_1&0} with $\tau_1 = \tau_2 = 2,$ and $n=2$ to get
        \begin{align}
            \left \lvert \nu_1(f) - \nu_0(f) \right \rvert \le \exp \left[24\beta^2S^4\right] &\biggl(  16\beta^2S^4 \nu_1(\lvert R_{12} - q \rvert^2)\notag\\
            & \phantom{===}\ + 4 \beta^2 S^4 \left(2 + \frac{3}{N} \right)\nu_1\left(\left \lvert R_{1,2}-q\right \rvert^2 \right) \biggr). \label{v1-v0}
        \end{align}
        Therefore, combining \eqref{v1-v0} with equation \eqref{nu0f}, we obtain 
        \begin{align}
            \nu_1\left(\left \lvert R_{1,2}-q\right \rvert^2 \right) \le \frac{1}{N}(p^2 - q^2) + 12 \beta^2 S^4\left(2 + \frac{1}{N} \right)  \exp \left[24\beta^2S^4\right] \nu_1\left(\left \lvert R_{1,2}-q\right \rvert^2 \right). \notag
        \end{align}
        Note that $p,q \in [0,S^2]$ and $p \ge q$, then $0 \le p^2 - q^2 \le S^2$.
        Choose $\beta_0$ such that 
        $$ 12\beta_0^2 S^4\left( 2 + \frac{1}{N} \right)  \exp \left[24\beta_0^2S^4\right] \le \frac{15}{16},$$
        then we have 
        $$  \nu_1\left(\left \lvert R_{1,2}-q\right \rvert^2 \right) \le \frac{S^2}{N} + \frac{15}{16}   \nu_1\left(\left \lvert R_{1,2}-q\right \rvert^2 \right),$$
        and hence
        \begin{align}
            \nu_1\left(\left \lvert R_{1,2}-q\right \rvert^2 \right) \le \frac{16S^2}{N}. \label{ineq R12}
        \end{align}
        We use a similar method to show concentration of $R_{1,1}$. We can write
        $$\nu_1\left(\left(R_{1,1}-p\right)^2\right) = \frac{1}{N} \sum_{i \le N} \nu_1\left[\left((\sigma_i^1)^2  -  p\right)\left(R_{1,1} -  p\right)\right] = \nu_1(g),$$
        where
        $$\kappa := \left(\epsilon_1^2  - p\right) \left(R_{1,1} - p\right).$$
        It follows that 
        $$\kappa = \left(\epsilon_1^2  - p \right) \left(\frac{\epsilon_1^2} {N} + R^-_{1,1} - p \right) = \frac{1}{N}\left[\left(\epsilon_1^2 \right)^2 - \epsilon_1^2 p \right] +  \left(\epsilon_1^2  - p\right)\left(R^-_{1,1} -  p\right),$$
        and by Lemma \ref{pq value} 
        $$\nu_0 \left[\left(\epsilon_1^2- p\right)\left(R^-_{1,1} -  p\right)\right] =  \nu_0 \left(\epsilon_1^2 - p\right) \nu_0\left(R^-_{1,1} -  p\right) = \left[ \nu_0(\epsilon_1^2) - p\right]\nu_0\left(R^-_{1,1} -  p\right) = 0.$$
        Hence, 
       \begin{align}
            \nu_0(\kappa)=  \frac{1}{N} \nu_0\left[\left(\epsilon_1^2\right)^2 - \epsilon_1^2 p\right] \le  \frac{1}{N} \left[\nu_0\left(S^4 \right) - \nu_0\left(\epsilon_1^2 \right)p\right] \le \frac{1}{N}(S^4 - p^2) \le \frac{S^4}{N}. \label{nu0g}
       \end{align}
        By definition of $\kappa$, $\nu_1\left(\left(R_{1,1}-p\right)^2\right) = \nu_1(\kappa)$. Also note that $R_{1,1} , \epsilon_1^2, p \in [0,S^2]$. We have 
        $$\left \lvert \epsilon_1^2 - p \right \rvert \le S^2 \quad \text{and} \quad |R_{1,1}-p| \le S^2,$$
        thus, 
        $$\left \lvert \kappa \right \rvert = \left \lvert  \left(\epsilon_1^2 - p\right) \left(R_{1,1} - p\right)  \right \rvert \le S^4 |R_{1,1}-p|.$$
       By Lemma \ref{difference nu_1&0} with $\tau_1 = \tau_2 = 2,$ and $n=2$, we get
        \begin{align}
            \left \lvert \nu_1(\kappa) - \nu_0(\kappa) \right \rvert \le \exp \left[24\beta^2S^4\right] &\biggl(  4 \beta^2 S^4\left[\nu_1(\lvert R_{11} - p \rvert^2)\right]\notag\\
            & \phantom{===}\ + 4 \beta^2 S^4 \left(4 + \frac{3}{N} \right)\nu_1\left(\left \lvert R_{1,1}-p\right \rvert^2 \right) \biggr). \label{house}
        \end{align}
        Therefore, combining \eqref{house} with \eqref{nu0g}, we obtain
        \begin{align}
            \nu_1\left(\left \lvert R_{1,1}-p\right \rvert^2 \right) &\le \frac{S^4}{N} + 4\beta^2 S^4 \left(5+ \frac{3}{N} \right)  \exp \left[24\beta^2S^4\right] \nu_1\left(\left \lvert R_{1,1}- p\right \rvert^2 \right). \notag
        \end{align}
        Choosing $\beta_1$ such that 
        \begin{align*}
                4\beta_1^2 S^4\left(5 + \frac{3}{N} \right)  \exp \left[24\beta_1^2S^4\right] \le \frac{15}{16},
        \end{align*}       
        we have 
        $$  \nu_1\left(\left \lvert R_{1,1}-p\right \rvert^2 \right) \le \frac{S^4}{N} + \frac{15}{16}   \nu_1\left(\left \lvert R_{1,1}-p\right \rvert^2 \right),$$
        i.e.
        $$ \nu_1\left(\left \lvert R_{1,1}-p\right \rvert^2 \right) \le \frac{16S^4}{N}.$$
        Now take $\hat{\beta} =  \min (\beta_0, \beta_1)$. Then for all $\beta < \hat{\beta},$ we have:
        $$ \nu_1\left(\left \lvert R_{1,2}-q\right \rvert^2 \right) \le \frac{16S^2}{N} \quad \text{and}\quad  \nu_1\left(\left \lvert R_{1,1}-p\right \rvert^2 \right) \le \frac{16S^4}{N}.$$
    \end{proof}

    \section{TAP equations for the Ghatak-Sherrington model}

    In this section, we prove Theorem \ref{TAP}. Set $\beta_-$ to be
    $$\frac{\beta_-}{\sqrt{N-1}} = \frac{\beta}{\sqrt{N}}.$$ 
   Note that for some positive constant $K$, $$\lvert \beta - \beta_- \rvert \le  \frac{K}{N}.$$ Let $p_- = p_-(N-1)$ and $q_- = q_-(N-1)$ be so that 
    \begin{align}
        p_- &= \e \left[\frac{\sum_{\gamma=1}^{S} \gamma^2 \cdot 2 \ch\left[\gamma \left( \sqrt{q_-}\beta_- X+h\right)\right]\exp\left(\gamma^2 \left[ D+\frac{\beta_-^2}{2}(p_--q_-)\right]\right)}{1+ \sum_{\gamma=1}^{S} 2\ch\left[ \gamma \left(\sqrt{q_-}\beta_- X+h \right)\right]\exp\left(\gamma^2 \left[ D+\frac{\beta_-^2}{2}(p_--q_-)\right]\right)}\right], \notag\\
        q_- &= \e \left[\frac{ \sum_{\gamma=1}^{S} \gamma \cdot 2\sh\left[\gamma \left( \sqrt{q_-}\beta_- X+h\right)\right]\exp\left(\gamma^2 \left[ D+\frac{\beta_-^2}{2}(p_--q_-)\right]\right)}{1+ \sum_{\gamma=1}^{S}  2\ch\left[\gamma \left( \sqrt{q_-}\beta_- X+h\right)\right]\exp\left(\gamma^2 \left[ D+\frac{\beta_-^2}{2}(p_--q_-)\right]\right)}\right]^2. \notag
    \end{align}
    We have the following lemma.

    \begin{lemma} \label{lem pp- qq-}
        There exists a $K,\hat{\beta} >0 $ such that for all $\beta < \hat{\beta}$, $D,h \in \R$, we have 
        \begin{align}
            |p - p_-| &\le \frac{K}{N}, \label{pp-}\\
            |q - q_-| &\le \frac{K}{N}. \label{qq-}
        \end{align}
    \end{lemma}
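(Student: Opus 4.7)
The plan is to treat the equations \eqref{p}--\eqref{q} as a fixed point problem and compare fixed points for two close values of the inverse temperature. Define a map $F(p,q;\beta)=(F_{1}(p,q;\beta),F_{2}(p,q;\beta))$ on $[0,S^{2}]^{2}$ by letting $F_{1}$ and $F_{2}$ be the right-hand sides of \eqref{p} and \eqref{q}, respectively. By construction, $(p,q)=F(p,q;\beta)$ and $(p_{-},q_{-})=F(p_{-},q_{-};\beta_{-})$, and by Proposition \ref{pq eqn} both fixed points exist and are unique in the relevant range of parameters. Since the integrands are smooth bounded functions of $p,q,\beta$ with all derivatives integrable against the Gaussian weight, $F$ is of class $C^{1}$ on $[0,S^{2}]^{2}\times[0,\tilde\beta)$.

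The first key step is to show that for $\beta$ small enough, the map $F(\cdot,\cdot;\beta):[0,S^{2}]^{2}\to[0,S^{2}]^{2}$ is a strict contraction with Lipschitz constant $L=L(\beta)<1$ uniformly in $N$ (this is exactly the computation underlying the uniqueness claim in Proposition \ref{pq eqn}, which I would cite or reproduce). Concretely, one bounds the partial derivatives $\partial_{p}F_{i}$ and $\partial_{q}F_{i}$ by differentiating under the expectation; each derivative carries an explicit factor of $\beta^{2}$ (from the exponent $\tfrac{\beta^{2}}{2}(p-q)$) or $\beta$ (from $\sqrt{q}\beta X$), multiplied by factors that are bounded by polynomials in $S$. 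Consequently, for $\beta<\hat\beta$ one has $\|DF(\cdot,\cdot;\beta)\|\le L<1$.

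The second key step is a uniform Lipschitz estimate in $\beta$: there is a constant $C=C(S,D,h,\hat\beta)$ such that
\[
\|F(p,q;\beta)-F(p,q;\beta_{-})\|\le C|\beta-\beta_{-}|
\]
for all $(p,q)\in[0,S^{2}]^{2}$ and $\beta,\beta_{-}<\hat\beta$. This again follows from differentiating the integrand in $\beta$ and bounding the resulting expressions using $|X|$-moments and the uniform boundedness of the fractional expressions. Combining the two steps,
\begin{align*}
\|(p,q)-(p_{-},q_{-})\| &= \|F(p,q;\beta)-F(p_{-},q_{-};\beta_{-})\| \\
&\le \|F(p,q;\beta)-F(p_{-},q_{-};\beta)\| + \|F(p_{-},q_{-};\beta)-F(p_{-},q_{-};\beta_{-})\| \\
&\le L\,\|(p,q)-(p_{-},q_{-})\| + C|\beta-\beta_{-}|,
\end{align*}
so $\|(p,q)-(p_{-},q_{-})\|\le\frac{C}{1-L}|\beta-\beta_{-}|$. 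Since $|\beta-\beta_{-}|\le K/N$ by the remark preceding the lemma, this gives both \eqref{pp-} and \eqref{qq-} with a new constant.

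The main obstacle is step one: producing an explicit Lipschitz constant $L<1$ that is independent of $N$, $D$, and $h$. The difficulty is that the integrands involve ratios whose denominators include $\ch[\gamma(\sqrt{q}\beta X+h)]\exp(\gamma^{2}[D+\tfrac{\beta^{2}}{2}(p-q)])$, and these can be large or small depending on $D$; one must check that differentiating the ratio produces only combinations of ``probability-like'' weights that are automatically bounded in $[0,1]$, so that the $D$-dependence cancels. This cancellation is the same mechanism that makes Proposition \ref{pq eqn} hold uniformly in $D\in\mathbb{R}$, and once it is in hand for the contraction estimate, the Lipschitz bound in $\beta$ is of the same flavor but easier.
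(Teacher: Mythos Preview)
Your argument is correct. The paper takes a slightly different route: it views the unique solutions as functions $p(\beta),q(\beta)$, differentiates the fixed-point equations implicitly to obtain a closed formula for $p'(\beta)$ (and analogously $q'(\beta)$), checks that all partial derivatives $\partial_{\beta}G,\partial_{p}G,\partial_{q}G,\partial_{\beta}F,\partial_{p}F,\partial_{q}F$ are bounded (uniformly in $D,h$, via the same ``probability-like'' cancellation you describe), and then applies the mean value theorem together with $|\beta-\beta_{-}|\le K/N$. Your contraction-stability argument packages the same ingredients differently: instead of inverting the Jacobian implicitly, you use the Lipschitz bound $L<1$ on $(p,q)\mapsto F(p,q;\beta)$---which is exactly the content of the proof of Proposition~\ref{pq eqn}---together with Lipschitz continuity in $\beta$. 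Your route has the minor advantage that it reuses Proposition~\ref{pq eqn} directly and sidesteps the (easy but unstated in the paper) verification that the implicit-function denominator $(1-\partial_{p}G)(1-\partial_{q}F)-\partial_{q}G\,\partial_{p}F$ is bounded away from zero; the paper's route has the minor advantage of producing an explicit derivative formula. Regarding your ``main obstacle'': the uniformity of $L$ in $D,h$ is not an obstacle at all---the proof of Proposition~\ref{pq eqn} already gives explicit bounds $L_{i}\le C_{i}S^{4}\beta^{2}$ with no dependence on $D$ or $h$, so you can simply cite that computation; the Lipschitz bound in $\beta$ follows from the same bookkeeping applied to $\partial_{\beta}G,\partial_{\beta}F$.
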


    \begin{proof}
        We will show \eqref{pp-} first, and \eqref{qq-} follows similarly. Let
        $$\phi (X) = \frac{\sum_{\gamma=1}^{S} \gamma^2 \cdot 2 \ch\left[\gamma \left( \sqrt{q}\beta X+h\right)\right]\exp\left(\gamma^2 \left[ D+\frac{\beta^2}{2}(p-q)\right]\right)}{1+ \sum_{\gamma=1}^{S} 2\ch\left[ \gamma \left(\sqrt{q}\beta X+h \right)\right]\exp\left(\gamma^2 \left[ D+\frac{\beta^2}{2}(p-q)\right]\right)},$$
        and 
        $$\psi (X) = \frac{ \sum_{\gamma=1}^{S} \gamma \cdot 2\sh\left[\gamma \left( \sqrt{q}\beta X+h\right)\right]\exp\left(\gamma^2 \left[ D+\frac{\beta^2}{2}(p-q)\right]\right)}{1+ \sum_{\gamma=1}^{S}  2\ch\left[\gamma \left( \sqrt{q}\beta X+h\right)\right]\exp\left(\gamma^2 \left[ D+\frac{\beta^2}{2}(p-q)\right]\right)}. $$
        Define
        $$G\left(\beta, p, q\right) = \e \left[\phi(X)\right],$$
        and
        $$F\left(\beta, p, q \right) = \e \left[\psi(X)\right]^2 .$$
        Also define $p(\beta), q(\beta)$ by
        \begin{align}
            p(\beta) = G\left(\beta, p(\beta), q(\beta)\right), \, \,  \text{and} \, \, \,   q(\beta) = F\left(\beta, p(\beta), q(\beta)\right). \notag
        \end{align}
        We have 
        \begin{align}
            q'(\beta) = \frac{\frac{\partial F}{\partial \beta} + \frac{\partial F}{\partial p} \cdot p'(\beta)}{1 - \frac{\partial F}{\partial q}}, \, \,  \text{and} \, \, \, p'(\beta) = \frac{\frac{\partial G}{\partial \beta} + \frac{\partial G}{\partial q} \cdot q'(\beta)}{1 - \frac{\partial G}{\partial p}}. \notag
        \end{align}
        Plug the equation of $q'(\beta)$ into $p'(\beta)$, then we obtain
        \begin{align}
            p'(\beta) = \frac{\frac{\partial G}{\partial \beta} \left(1 - \frac{\partial F}{\partial q}\right)  + \frac{\partial G}{\partial q} \cdot \frac{\partial F}{\partial \beta}}{\left(1 - \frac{\partial G}{\partial p}\right) \left(1 - \frac{\partial F}{\partial q}\right) - \frac{\partial G}{\partial q} \cdot \frac{\partial F}{\partial p}}. \notag
        \end{align}
        Now we calculate the partial derivatives of $G\left(\beta, p, q \right)$ and $F\left(\beta, p, q \right)$. To simplify the notation, we introduce the following functions.
        Define
  \begin{align}\label{def:fL6} f(X) &= \frac{1}{1+ \sum_{\gamma=1}^{S}  2\ch\left[\gamma \left( \sqrt{q}\beta X+h\right)\right]\exp\left(\gamma^2 \left[ D+\frac{\beta^2}{2}(p-q)\right]\right)},\\
        \kappa_{ch}(X) &= 2 \ch\left[\gamma \left( \sqrt{q}\beta X+h\right)\right]\exp\left(\gamma^2 \left[ D+\frac{\beta^2}{2}(p-q)\right]\right), \label{def:fL7}\\
               \kappa_{sh} (X)&= 2\sh\left[\gamma \left( \sqrt{q}\beta X+h\right)\right]\exp\left(\gamma^2 \left[ D+\frac{\beta^2}{2}(p-q)\right]\right). \label{def:fL8}
       \end{align}
        Then 
        $$\phi (X) =  \sum_{\gamma=1}^S \gamma^2 \cdot \kappa_{ch}(X)f(X), \,\, \text{and} \,\, \,\psi (X) =  \sum_{\gamma=1}^S \gamma \cdot \kappa_{sh}(X)f(X). $$
        Define 
        $$\theta(X) = \sum_{\gamma=1}^S \gamma^4 \cdot \kappa_{ch}(X)f(X), \,\, \text{and} \, \,\,\eta (X) =  \sum_{\gamma=1}^S \gamma^3 \cdot \kappa_{sh}(X)f(X).$$
        Since $\beta < \hat{\beta}$, and $D,h \in \R$, it's clear that $\phi(X), \psi(X), f(X), \theta(X)$, and $\eta(X)$ are bounded functions. By some straight-forward algebra, it is easy to see that the functions $\phi'(X), \psi'(X), f'(X), \theta'(X)$, and $\eta'(X)$ are also bounded. 
        Hence,
        \begin{align*}
            \frac{\partial G}{\partial \beta} &= \e \left[ \sqrt{q} \left(\eta'(X) - \psi'(X)\phi(X) - \psi(X)\phi'(X)\right) + \beta(p-q)\left(\theta(X) - \phi^2(X)\right) \right], \\  
            \frac{\partial G}{\partial p} &= \e \left[\frac{\beta^2}{2} \left( \theta(X)  - \phi^2(X)\right) \right],\\
            \frac{\partial G}{\partial q} &= \e \left[ \frac{\beta}{2\sqrt{q}} \left(\eta'(X) - \psi'(X)\phi(X) - \psi(X)\phi'(X)\right) + \frac{\beta^2}{2}\left(\phi^2(X) - \theta(X) \right)\right],\\
            \frac{\partial F}{\partial \beta} &= 2 \e \left[\sqrt{q} \left[\psi'(X)\phi(X) + \psi(X)\phi'(X)\right] +\beta(p-q)\psi(X)\left[\eta(X)- \phi(X)\right] - 3\psi^2(X) \psi'(X) \right],\\
            \frac{\partial F}{\partial p} &= \e \left[ \beta^2 \psi(X) \left(\eta(X) - \psi(X)\phi(X)\right) \right],\\
            \frac{\partial F}{\partial q} &= \e \left[ \frac{\beta}{\sqrt{q}} \left(\psi'(X)\phi(X) + \psi(X)\phi'(X) - 3\psi^2(X)\psi'(X)\right) + \beta^2 \left(\psi^2(X)\phi(X) - \psi(X)\eta(X)\right)\right].
        \end{align*}
       It follows that all the partial derivatives of $G\left(\beta, p, q \right)$ and $F\left(\beta, p, q \right)$ are bounded, hence so is $p'(\beta)$.
        Note that for some positive number $K$, 
        $$\lvert \beta - \beta_- \rvert \le  \frac{K}{N}.$$
        By the mean value theorem, 
        $$|p - p_-| \le \frac{K}{N}.$$
        Similarly, $|q - q_-| \le \frac{K}{N}$ is satisfied.   
    \end{proof}
      
    Before the proof of Theorem \ref{TAP},  we state a result which is the analogue of Theorem 1.7.11 in \cite{talagrand_mean_2011}.  Consider independent standard Gaussian random variables $y_{i}$ and $\xi$, which are independent
    of the randomness of $\la \cdot \ra$, and denote $\e_{\xi}$ the expectation with respect to the random variable $\xi$ only. 
    
    \begin{theorem}(\cite{talagrand_mean_2011}, Theorem 1.7.11)\label{appro of local field}
        Assume $\beta < \hat{\beta}$,  and $D, h \in \R$. Let $U$ be an infinitely differentiable function on $\R$ with derivatives given by $U^{(l)}$. Assume for all $l$ and $b$, the $l^{th}$ derivative of $U$ satisfies
        \begin{align*}
            \e |U^{(l)}(z)|^b < \infty
        \end{align*}
        where $z$ is a Gaussian random variable. Then, using the notation $\dot{\sigma}_i = \sigma_i - \la \sigma_i \ra$, we have for $k=1,2$
        \begin{align*}
            \e \left(\left \la U\left(\frac{1}{\sqrt{N}}\sum_{i \le N} y_{i} \dot{\sigma}_i\right) \right \ra - \e_{\xi}U \left(\xi \sqrt{p-q}\right) \right)^{2k} \le \frac{K}{N},
        \end{align*}
        where $p$ and $q$ satisfy the equations \eqref{p} and \eqref{q} respectively, and the constant $K$ depends on $U,\beta$, but not on $N$.
    \end{theorem}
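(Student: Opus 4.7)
My plan is to follow Talagrand's cavity and Gaussian interpolation strategy (Theorem~1.7.11 in \cite{talagrand_mean_2011}) adapted to the GS setting, where both the overlap $R_{1,2}$ and the self-overlap $R_{1,1}$ must be controlled jointly through Proposition~\ref{overlap}.

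The starting point is a replica rewriting. With $m_i = \la\sigma_i\ra$, $W_l = \frac{1}{\sqrt N}\sum_{i\le N} y_i(\sigma_i^l - m_i)$, and $c = \e_\xi U(\xi\sqrt{p-q})$, replica symmetry yields
$$\e\bigl(\la U(W)\ra - c\bigr)^{2k} = \e\la \prod_{l=1}^{2k}\bigl(U(W_l) - c\bigr)\ra.$$
Conditionally on the disorder and on $\sigma^1,\ldots,\sigma^{2k}$, the vector $(W_1,\ldots,W_{2k})$ is a centered Gaussian in $y$ with covariance $M_{l,l'} = \frac{1}{N}\sum_i (\sigma_i^l - m_i)(\sigma_i^{l'} - m_i)$. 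I introduce i.i.d.\ standard normals $\xi_1,\ldots,\xi_{2k}$ independent of all other randomness, interpolate $W_l(t) = \sqrt t\,W_l + \sqrt{1-t}\sqrt{p-q}\,\xi_l$, and set $\Phi(t) = \e_{y,\xi}\prod_l\bigl(U(W_l(t)) - c\bigr)$. Then $\Phi(0) = \prod_l\bigl(\e_\xi U(\xi\sqrt{p-q})-c\bigr)=0$, and Gaussian integration by parts gives
$$\Phi'(t) = \sum_{l<l'} M_{l,l'}\,\Psi_{l,l'}(t) + \tfrac{1}{2}\sum_l \bigl(M_{l,l}-(p-q)\bigr)\Psi_{l,l}(t),$$
with each $\Psi$ a bounded smooth functional built from derivatives of $U$ integrated against the interpolated Gaussians.

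The heart of the argument is the pair of concentration estimates
$$\e\la M_{l,l'}^2\ra \le K/N \quad (l\ne l'), \qquad \e\la\bigl(M_{l,l}-(p-q)\bigr)^2\ra \le K/N.$$
Setting $A_l = \frac{1}{N}\sum_i m_i\sigma_i^l$ and $C = \la R_{1,2}\ra = \frac{1}{N}\sum_i m_i^2$, I expand $M_{l,l'} = (R_{l,l'} - C) - (A_l - C) - (A_{l'} - C)$ for $l\ne l'$, with an analogous diagonal decomposition for $l=l'$. The pieces $R_{l,l'} - C$, $R_{l,l} - p$, and $C - q = \la R_{1,2}\ra - q$ are directly controlled by Proposition~\ref{overlap}. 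For the mixed term, an auxiliary third replica $\sigma^3$ yields the identity
$$\la(A_l - C)^2\ra = \la(R_{1,2} - C)(R_{1,3} - C)\ra,$$
so that Cauchy--Schwarz together with $\e\la(R_{1,2} - C)^2\ra \le K/N$ gives $\e\la(A_l - C)^2\ra \le K/N$.

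Finally, I integrate $\Phi'(t)$ over $[0,1]$ and take $\e\la\cdot\ra$. At the reference point $V_l = p-q$ for all $l$ and $M_{l,l'}=0$ for $l\ne l'$, each coefficient $\Psi_{l,l'}(t)$ either vanishes because it contains a factor $\e_\xi(U(\xi\sqrt{p-q})-c) = 0$ (this is automatic for every $\Psi$ when $2k\ge 4$, and holds for the diagonal $\Psi_{l,l}$ when $2k=2$), or is a nonzero constant paired with $\e\la M_{l,l'}\ra = 0$ (the off-diagonal case for $2k=2$). The residual fluctuations of $\Psi$ are $O(|V-(p-q)| + |M|)$, and by Cauchy--Schwarz combined with the concentration estimates above, each term contributes $O(1/N)$. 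The main obstacle is Step~3: the control of $A_l - C$, which is \emph{not} itself an overlap of two replicas but a mixed object; the three-replica identity above is the key observation that reduces it to the joint overlap and self-overlap concentration of Proposition~\ref{overlap}.
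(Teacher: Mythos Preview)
Your proposal is correct and follows essentially the same Gaussian-interpolation strategy as the paper: both defer to Talagrand's framework and reduce everything to the concentration estimates $\e\la M_{l,l'}^2\ra \le K/N$ and $\e\la(M_{l,l}-(p-q))^2\ra \le K/N$, which in turn rest on Proposition~\ref{overlap}. One minor simplification worth noting: your three-replica identity $\la(A_l-C)^2\ra = \la(R_{1,2}-C)(R_{1,3}-C)\ra$ is valid, but the paper handles this piece more directly via Jensen, writing $A_l = \la R_{l,l'}\ra_{l'}$ so that $(A_l-q)^2 \le \la(R_{l,l'}-q)^2\ra_{l'}$, which immediately gives $\e\la(A_l-q)^2\ra \le \e\la(R_{1,2}-q)^2\ra$ without introducing a third replica.
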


    \begin{proof}
        The proof of Theorem \ref{appro of local field} is similar to Talagrand's proof of the Theorem 1.7.11 \cite{talagrand_mean_2011} except for the following differences. We write the case $k=1$ first.
        Let $\dot{S_l} =  \frac{1}{\sqrt{N}}\sum_{i \le N}y_i\dot{\sigma}_i^l$ and $\e_0$ denote the expectation with respect to $y_{i}$ and $\xi_l$ only. 
        Set 
        $$T_{l,l} = \e_0 (\dot{S_l}^2) -  \e_0 (\xi_l \sqrt{p-q})^2 = \frac{1}{N}\sum_{i \le N}(\dot{\sigma}_i^l)^2 - (p-q),$$
        and for $l \ne l'$, let
        $$T_{l,l'} = \e_0 (\dot{S_l}\dot{S_{l'}}) -  \e_0 \left[\xi_l \xi_{l'}(p-q)\right] = \frac{1}{N}\sum_{i \le N}(\dot{\sigma}_i^l)(\dot{\sigma}_i^{l'}).$$
        We claim that there exists a positive number $K$ such that 
        $$\e \la T^2_{l,l'} \ra \le \frac{K}{N}.$$ We explain the case $l=l'$ first.
        Since  
        $$(\dot{\sigma}_i^l)^2 = \left(\sigma_i^l - \la \sigma_i \ra \right)^2 = (\sigma_i^l)^2 - 2\sigma_i^l \la \sigma_i^l \ra + \la \sigma_i \ra^2,$$
        it follows that
        \begin{align}
            T_{l,l} &= \frac{1}{N}\sum_{i \le N}(\dot{\sigma}_i^l)^2 - (p-q) =  \frac{1}{N}\sum_{i \le N} \left(  (\sigma_i^l)^2 - 2\sigma_i^l \la \sigma_i^l \ra + \la \sigma_i \ra^2 \right) - (p-q) \notag \\
            &= \frac{1}{N}\sum_{i \le N}\left[(\sigma_i^l)^2 - p\right] + \frac{1}{N}\sum_{i \le N} \left[ \left(\la \sigma_i \ra^2 - q\right) + 2 \left(q - \sigma_i^l \la \sigma_i^l \ra \right)\right]. \notag
        \end{align}
        We control the first and second term of this sum separately. 
        By Proposition \ref{overlap}, 
        \begin{align}
              \e \left \la \left[\frac{1}{N}\sum_{i \le N}(\sigma_i^l)^2 - p\right]^2 \right \ra = \e \left \la (R_{1,1}- p)^2\right \ra \le \frac{16S^4}{{N}} \label{sum1 term}
        \end{align} 
        For the second term of the sum, we use the fact that for any $A$ and $B$, we have the inequality $(A + B)^2 \le 2(A^2 + B^2)$. Apply Jensen's inequality and Proposition \ref{overlap}, we have
       \begin{align}
           &\e \left \la \left[ \frac{1}{N}\sum_{i \le N}  \left(\la \sigma_i \ra^2 - q\right) + 2 \left(q - \sigma_i^l \la \sigma_i^l \ra \right)\right]^2 \right \ra \notag \\
           &\phantom{=======}\ \le 2\left(  \e \left \la \left[ \frac{1}{N}\sum_{i \le N}  \left(\la \sigma_i \ra^2 - q\right) \right]^2 \right \ra +  \e \left \la \left[  \frac{1}{N}\sum_{i \le N}  2 \left(q - \sigma_i^l \la \sigma_i^l \ra \right)\right]^2 \right \ra \right) \notag \\
           &\phantom{=======}\ \le 2\left( \e \left \la (R_{1,2} - q)^2\right \ra  + 4 \e \left \la (q- R_{1.2})^2\right \ra \right) \notag \\
           &\phantom{=======}\ \le 2S^2\left( \frac{16}{N} + \frac{64}{N} \right) = \frac{160S^2}{N}. \label{sum2 term}
       \end{align}
       Therefore, combining \eqref{sum1 term} and \eqref{sum2 term}, we obtain that for some positive number $K$
       \begin{align}
        \e \la T_{l,l}^2 \ra &\le 2\left(\e \left \la \left[\frac{1}{N}\sum_{i \le N}(\sigma_i^l)^2 - p\right]^2 \right \ra  + \e \left \la \left[ \frac{1}{N}\sum_{i \le N}  \left(\la \sigma_i \ra^2 - q\right) + 2 \left(q - \sigma_i^l \la \sigma_i^l \ra \right)\right]^2 \right \ra \right) \notag\\ 
        & \phantom{==========================================}\ \le \frac{K}{N}. \notag
       \end{align}
       As for the case $l \neq l'$
       \begin{align}
           T_{l,l'} = \frac{1}{N}\sum_{i \le N}(\dot{\sigma}_i^l)(\dot{\sigma}_i^{l'}) =  \frac{1}{N}\sum_{i \le N} \left( \sigma_i^{l} \sigma_i^{l'}- \sigma_i^l \la \sigma_i \ra -\sigma_i^{l'} \la \sigma_i \ra + \la \sigma_i \ra^2\right), \notag
       \end{align}
       thus, 
       $$\e \la T_{l,l'} \ra = \e \frac{1}{N}\sum_{i \le N} \left( \la \sigma_i^{l} \sigma_i^{l'} \ra - \la \sigma_i^l \ra \la \sigma_i \ra -\la \sigma_i^{l'} \ra \la \sigma_i \ra + \la \sigma_i \ra^2\right) = 0 \le \frac{K}{N}.$$
       Hence, for all $l$ and $l'$, and some positive number $K$, we have
       $$\e \la T^2_{l,l'} \ra \le \frac{K}{N}.$$
       This estimate replaces inequality (1.201) with $r=1$ in Talagrand's book \cite{talagrand_mean_2011}. For the case $k=2$ we proceed similarly and use the bounds 
\begin{align*}
\mathbb E \la (R_{12} - q)^{4}\ra &\le 4S^{2} \mathbb E \la (R_{12} - q)^{2} \ra \leq \frac{64 S^{4}}{N},\\
\mathbb E \la (R_{11} - p)^{4}\ra &\le 4S^{2} \mathbb E \la (R_{11} - p)^{2} \ra \leq \frac{64 S^{6}}{N}.
\end{align*}
    \end{proof}
    Now we state two corollaries of the Theorem \ref{appro of local field}, which are equivalent of  Talagrand's Corollary 1.7.13 and 1.7.15 \cite{talagrand_mean_2011}. 

    \begin{corollary}
        There exists a $K, \hat{\beta} >0 $ such that for all $\beta < \hat{\beta}$, $D,h \in \R$, and $\epsilon \in \Sigma$ we have
        \begin{align}
            \e \left( \left \la \exp \frac{\epsilon \beta}{\sqrt{N}} \sum_{i \le N} y_i \sigma_i \right \ra - \exp \left[ \frac{\epsilon^2 \beta^2}{2}(p-q) \right] \exp \frac{\epsilon \beta}{\sqrt{N}} \sum_{i \le N} y_i \la \sigma_i \ra \right)^2 \le \frac{K}{\sqrt{N}}, \label{cor ine 1}
         \end{align}
        and
        \begin{align}
            \e  &\left( \left \la\frac{1}{\sqrt{N}} \sum_{i \le N} y_i \dot{\sigma}_i  \exp \frac{\epsilon \beta}{\sqrt{N}} \sum_{i \le N} y_i \sigma_i \right \ra - \epsilon\beta(p-q)\exp \left[\frac{\epsilon^2 \beta^2}{2}(p-q)\right] \exp \frac{\epsilon \beta}{\sqrt{N}} \sum_{i \le N} y_i \la \sigma_i \ra\right)^2 \notag\\ 
            & \phantom{==============================================}\ \le \frac{K}{\sqrt{N}} \label{cor ine 2}
        \end{align}
        where K does not depend on N.
    \end{corollary}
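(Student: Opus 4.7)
The strategy is to reduce both bounds to the $L^4$ version ($k=2$) of Theorem \ref{appro of local field} by decomposing $\sigma_i = \dot\sigma_i + \la \sigma_i \ra$ and pulling the Gibbs-deterministic factor out of each Gibbs average. Set $A := \exp \frac{\epsilon \beta}{\sqrt{N}} \sum_{i \le N} y_i \la \sigma_i \ra$. Then the left-hand sides of \eqref{cor ine 1} and \eqref{cor ine 2} rewrite respectively as $A \cdot B_1$ and $A \cdot B_2$, where
\[
B_1 := \left\langle \exp \frac{\epsilon \beta}{\sqrt{N}} \sum_{i \le N} y_i \dot\sigma_i \right\rangle - \exp\!\left[\frac{\epsilon^2 \beta^2}{2}(p-q)\right],
\]
\[
B_2 := \left\langle \frac{1}{\sqrt{N}} \sum_{i \le N} y_i \dot\sigma_i \, \exp \frac{\epsilon \beta}{\sqrt{N}} \sum_{i \le N} y_i \dot\sigma_i \right\rangle - \epsilon\beta(p-q)\exp\!\left[\frac{\epsilon^2 \beta^2}{2}(p-q)\right].
\]
The goal is then to prove $\e(A B_j)^2 \le K/\sqrt{N}$ for $j=1,2$.

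To bound $B_1$, I apply Theorem \ref{appro of local field} with $U(x) = \exp(\epsilon \beta x)$. Every derivative $U^{(l)}(x) = (\epsilon \beta)^l \exp(\epsilon \beta x)$ has finite $L^b$ norm against a Gaussian, and a one-line Gaussian integral gives $\e_\xi U(\xi \sqrt{p-q}) = \exp[\epsilon^2 \beta^2 (p-q)/2]$, which is exactly the centering in $B_1$; the case $k=2$ therefore yields $\e B_1^4 \le K/N$. For $B_2$, I take $U(x) = x \exp(\epsilon \beta x)$, whose derivatives are polynomials times $\exp(\epsilon \beta x)$ and thus also satisfy the Gaussian moment hypothesis. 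Using the identity $\e[\xi e^{a\xi}] = a\, e^{a^2/2}$ with $a = \epsilon \beta \sqrt{p-q}$ gives $\e_\xi U(\xi \sqrt{p-q}) = \epsilon \beta (p-q) \exp[\epsilon^2 \beta^2 (p-q)/2]$, which matches the centering in $B_2$, so the theorem again yields $\e B_2^4 \le K/N$.

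To finish, Cauchy-Schwarz gives $\e(A B_j)^2 \le (\e A^4)^{1/2}(\e B_j^4)^{1/2}$. Conditioning on the disorder, $\sum_i y_i \la \sigma_i \ra$ is Gaussian with variance $\sum_i \la \sigma_i \ra^2 \le N S^2$, so
\[
\e A^4 = \e \exp\!\left(\frac{4\epsilon \beta}{\sqrt{N}} \sum_{i \le N} y_i \la \sigma_i \ra\right) \le \exp(8 \epsilon^2 \beta^2 S^2)
\]
is bounded uniformly in $N$ (since $|\epsilon| \le S$ and $\beta < \hat\beta$). Combined with $\e B_j^4 \le K/N$, this produces $\e (A B_j)^2 \le K'/\sqrt{N}$, yielding both \eqref{cor ine 1} and \eqref{cor ine 2}. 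The only delicate point is that the Cauchy-Schwarz step forces us to use the $L^4$ control from Theorem \ref{appro of local field}; an $L^2$ bound alone would only produce a rate $N^{-1/4}$ after multiplying by $A$, which is precisely the reason Theorem \ref{appro of local field} was stated for both $k=1$ and $k=2$.
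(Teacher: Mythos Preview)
Your proof is correct and is essentially the same argument the paper intends: the paper simply writes ``The proof is identical to the proof of Corollary 1.7.13 in \cite{talagrand_mean_2011},'' and Talagrand's proof there proceeds exactly as you do---factor out $A=\exp\frac{\epsilon\beta}{\sqrt N}\sum y_i\la\sigma_i\ra$, apply Theorem \ref{appro of local field} with $U(x)=e^{\epsilon\beta x}$ (resp.\ $U(x)=x e^{\epsilon\beta x}$), and close with Cauchy--Schwarz using the $k=2$ case and the Gaussian bound on $\e A^4$. Your final remark that the $k=2$ case of Theorem \ref{appro of local field} is precisely what is needed to absorb the $A$ factor and reach the $N^{-1/2}$ rate is also the correct explanation of why that theorem is stated for both $k=1$ and $k=2$.
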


    \begin{proof}
The proof is identical to the proof of Corollary 1.7.13 in \cite{talagrand_mean_2011}. 
    \end{proof}
For the rest of the section, we will use the following lemma.
\begin{lemma} \label{ineq}
        If $\left \vert \frac{A'}{B'}\right \vert \le B$ and $B \ge 1$, we have 
        $$ \left \lvert \frac{A'}{B'} - \frac{A}{B}\right \rvert \le |A - A'| + |B - B'|. $$
    \end{lemma}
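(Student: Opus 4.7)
The plan is to prove this purely algebraically by rewriting the difference $\tfrac{A'}{B'} - \tfrac{A}{B}$ so that one factor of $\tfrac{A'}{B'}$ and one factor of $\tfrac{1}{B}$ appear separately. First I would put the two fractions over a common denominator,
\[
\frac{A'}{B'} - \frac{A}{B} = \frac{A'B - AB'}{B'B},
\]
and then add and subtract $A'B'$ in the numerator to split the expression as
\[
\frac{A'}{B'} - \frac{A}{B} = \frac{A'(B-B')}{B'B} + \frac{B'(A'-A)}{B'B} = \frac{A'}{B'}\cdot\frac{B-B'}{B} + \frac{A'-A}{B}.
\]

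Next I would apply the triangle inequality to this decomposition, obtaining
\[
\left|\frac{A'}{B'} - \frac{A}{B}\right| \le \left|\frac{A'}{B'}\right|\cdot\frac{|B-B'|}{|B|} + \frac{|A'-A|}{|B|}.
\]
Now the two hypotheses come in exactly where needed: the assumption $|A'/B'| \le B$ cancels the $|B|$ in the denominator of the first term, yielding the bound $|B-B'|$; and the assumption $B \ge 1$ gives $1/|B| \le 1$, so the second term is bounded by $|A'-A|$. Combining these two estimates gives the desired inequality.

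There is essentially no obstacle here: the statement is an elementary calculus-of-fractions lemma, and the only mildly nonobvious step is the add-and-subtract trick producing the factorization above. The role of the two hypotheses is simply to absorb the factor $|A'/B'|/|B|$ and the residual $1/|B|$ that arise from that splitting. The lemma will then be used in the subsequent proof of Theorem \ref{TAP} to compare the cavity-type ratios $\la\sigma_N\ra$ and $\la\sigma_N^2\ra$ with the explicit ratios appearing on the right-hand sides of \eqref{TAP1} and \eqref{TAP2}, where the corollary inequalities \eqref{cor ine 1} and \eqref{cor ine 2} will control $|A-A'|$ and $|B-B'|$ in $L^2$.
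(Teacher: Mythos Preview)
Your proof is correct: the add-and-subtract decomposition
\[
\frac{A'}{B'} - \frac{A}{B} = \frac{A'}{B'}\cdot\frac{B-B'}{B} + \frac{A'-A}{B}
\]
followed by the triangle inequality and the two hypotheses is exactly the intended elementary argument. The paper in fact states Lemma~\ref{ineq} without proof, treating it as obvious, so there is nothing further to compare; your write-up fills in precisely the omitted details.
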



    \begin{corollary}
        Let
        \begin{align}
            \E = \exp \left( \frac{\epsilon \beta}{\sqrt{N}} \sum_{i \le N} y_i \sigma_i + \epsilon^2 D + \epsilon h\right). \label{Epsilon} 
        \end{align} 
        Recall that $\Av$ denotes average over $\epsilon \in \Sigma$. There exists a constant $K>0$ and $\hat{\beta }> 0$ such that for all $\beta < \hat{\beta}$, and  $D,h \in \R$, we have 
        \begin{align}
            \e \left(\frac{\left \la \Av \epsilon \E \right \ra }{\left \la  \Av \E \right \ra} - \frac{\sum_{\gamma=1}^{S} \gamma \cdot 2 \sh \left[ \gamma\left( \frac{\beta}{\sqrt{N}} \sum_{i \le N} y_i \la \sigma_i \ra + h\right)\right]\exp\left(\gamma^2 \left[ D+\frac{\beta^2}{2}(p-q)\right]\right)}{1+ \sum_{\gamma=1}^{S} 2\ch \left[ \gamma\left( \frac{\beta}{\sqrt{N}} \sum_{i \le N} y_i \la \sigma_i \ra + h\right)\right]\exp\left(\gamma^2 \left[ D+\frac{\beta^2}{2}(p-q)\right]\right)}  \right)^2 \le \frac{K}{\sqrt{N}}, \label{cor ine 3} 
        \end{align}
        \begin{align}
            \e \left(\frac{\left \la \Av \epsilon^2 \E \right \ra }{\left \la  \Av \E \right \ra} - \frac{\sum_{\gamma=1}^{S} \gamma^2 \cdot 2 \ch  \left[ \gamma\left( \frac{\beta}{\sqrt{N}} \sum_{i \le N} y_i \la \sigma_i \ra + h\right)\right] \exp\left(\gamma^2 \left[ D+\frac{\beta^2}{2}(p-q)\right]\right)}{1+\sum_{\gamma=1}^{S} 2\ch \left[ \gamma\left( \frac{\beta}{\sqrt{N}} \sum_{i \le N} y_i \la \sigma_i \ra + h\right)\right] \exp\left(\gamma^2 \left[ D+\frac{\beta^2}{2}(p-q)\right]\right)}  \right)^2 \le \frac{K}{\sqrt{N}}, \label{cor ine 5} 
        \end{align}
        \begin{align}
            \e \left( \frac{1}{\sqrt{N}} \sum_{i \le N} y_i \frac{\left \la \sigma_i \Av \E \right \ra }{\left \la  \Av \E \right \ra} - \beta(p -q)\frac{\left \la \Av \epsilon \E \right \ra }{\left \la  \Av \E \right \ra} - \frac{ 1}{\sqrt{N}} \sum_{i \le N} y_i \la \sigma_i \ra \right)^2 \le \frac{K}{\sqrt{N}}. \label{cor ine 4}
        \end{align}
    \end{corollary}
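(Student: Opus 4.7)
The plan is to reduce each of the three inequalities to the previous corollary, applying (\ref{cor ine 1}) and (\ref{cor ine 2}) separately for $\epsilon=\pm\gamma$ with $\gamma=1,\ldots,S$, and then to pass from numerator/denominator estimates to ratio estimates via Lemma~\ref{ineq}. The first step is algebraic expansion: since $\Av$ averages over $\epsilon\in\{0,\pm1,\ldots,\pm S\}$, pairing $\epsilon=+\gamma$ with $\epsilon=-\gamma$ and isolating the $\epsilon=0$ term gives
\begin{align*}
(2S+1)\la \Av\E\ra &= 1 + \sum_{\gamma=1}^{S} e^{\gamma^{2}D}\Bigl\la e^{\gamma(\beta/\sqrt N)\sum y_{i}\sigma_{i}+\gamma h} + e^{-\gamma(\beta/\sqrt N)\sum y_{i}\sigma_{i}-\gamma h}\Bigr\ra,\\
(2S+1)\la \Av\epsilon\E\ra &= \sum_{\gamma=1}^{S}\gamma\,e^{\gamma^{2}D}\Bigl\la e^{\gamma(\beta/\sqrt N)\sum y_{i}\sigma_{i}+\gamma h} - e^{-\gamma(\beta/\sqrt N)\sum y_{i}\sigma_{i}-\gamma h}\Bigr\ra,
\end{align*}
and an analogous identity for $(2S+1)\la \Av\epsilon^{2}\E\ra$ (prefactor $\gamma^{2}$, inner sign $+$).

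For each fixed $\gamma$, I apply (\ref{cor ine 1}) with $\epsilon=+\gamma$ and separately with $\epsilon=-\gamma$: each Gibbs expectation $\la \exp(\pm\gamma\beta/\sqrt N\sum y_{i}\sigma_{i})\ra$ is approximated in $L^{2}(\e)$ by $\exp(\gamma^{2}\beta^{2}(p-q)/2)\exp(\pm\gamma\beta/\sqrt N\sum y_{i}\la\sigma_{i}\ra)$ with squared error $\le K/\sqrt N$. Combining the $\pm$ pair reproduces $2\ch[\gamma(\beta/\sqrt N\sum y_{i}\la\sigma_{i}\ra+h)]$ (or $2\sh$) times $e^{\gamma^{2}[D+\beta^{2}(p-q)/2]}$. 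Setting $A=(2S+1)\la\Av\epsilon\E\ra$, $B=(2S+1)\la\Av\E\ra$ and letting $A',B'$ be the target numerator and denominator in (\ref{cor ine 3}), a triangle inequality over $\gamma$ yields $\e(A-A')^{2}\le K/\sqrt N$ and $\e(B-B')^{2}\le K/\sqrt N$. The $\epsilon=0$ term forces $B,B'\ge 1$, while $|A'/B'|\le S$ because $|\sh|\le\ch$ and $|\gamma|\le S$. Lemma~\ref{ineq} then gives $|A/B-A'/B'|\le|A-A'|+|B-B'|$; squaring and using $(a+b)^{2}\le 2a^{2}+2b^{2}$ produces (\ref{cor ine 3}). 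The identical argument with $\Av\epsilon^{2}\E$ replacing $\Av\epsilon\E$ proves (\ref{cor ine 5}).

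For (\ref{cor ine 4}), I split $\sigma_{i}=\la\sigma_{i}\ra+\dot\sigma_{i}$ so that
$$\frac{1}{\sqrt N}\sum_{i}y_{i}\frac{\la\sigma_{i}\Av\E\ra}{\la\Av\E\ra}=\frac{1}{\sqrt N}\sum_{i}y_{i}\la\sigma_{i}\ra+\frac{1}{\sqrt N}\sum_{i}y_{i}\frac{\la\dot\sigma_{i}\Av\E\ra}{\la\Av\E\ra},$$
and the first summand cancels the explicit $-\tfrac{1}{\sqrt N}\sum y_{i}\la\sigma_{i}\ra$ appearing in (\ref{cor ine 4}). Expanding $\Av\E$ as in the first paragraph and invoking (\ref{cor ine 2}) with $\epsilon=\pm\gamma$, the quantity $\bigl\la\tfrac{1}{\sqrt N}\sum y_{i}\dot\sigma_{i}\,e^{\pm\gamma\beta/\sqrt N\sum y_{j}\sigma_{j}}\bigr\ra$ is close in $L^{2}(\e)$ to $\pm\gamma\beta(p-q)e^{\gamma^{2}\beta^{2}(p-q)/2}\exp(\pm\gamma\beta/\sqrt N\sum y_{j}\la\sigma_{j}\ra)$. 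The $\pm$ combination produces $\sh$, and comparing with the expression for $A'$ above yields
$$\e\Bigl[(2S+1)\bigl\la\tfrac{1}{\sqrt N}\textstyle\sum y_{i}\dot\sigma_{i}\,\Av\E\bigr\ra-\beta(p-q)A'\Bigr]^{2}\le K/\sqrt N.$$
Dividing by $B$, combining with the already-proved (\ref{cor ine 3}) (which handles $A/B$ versus $A'/B'$), and one further application of Lemma~\ref{ineq} complete (\ref{cor ine 4}).

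The only delicate point is bookkeeping: the $L^{2}(\e)$ errors must be summed over the fixed range $\gamma=1,\ldots,S$ and then pushed through the division by $B$ without losing the $O(N^{-1/2})$ rate. This is controlled by repeated Cauchy--Schwarz, the uniform bounds $|A'/B'|\le S$ and $B,B'\ge 1$, and the fact that $S$ is fixed so that all combinatorial and exponential-in-$S$ constants can be absorbed into $K$.
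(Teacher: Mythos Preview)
Your proposal is correct and follows essentially the same approach as the paper: apply (\ref{cor ine 1}) and (\ref{cor ine 2}) for each $\epsilon=\pm\gamma$, sum over $\gamma=1,\dots,S$ to get $L^{2}$ estimates on numerator and denominator separately, and then invoke Lemma~\ref{ineq} to pass to the ratio; for (\ref{cor ine 4}) both you and the paper split off $\dot\sigma_{i}$, apply (\ref{cor ine 2}), divide by the denominator via Lemma~\ref{ineq}, and finish by combining with (\ref{cor ine 3}). Your write-up is in fact slightly more explicit than the paper's (you spell out the $(2S+1)$ normalization, the role of the $\epsilon=0$ term in guaranteeing $B,B'\ge 1$, and the bound $|A'/B'|\le S$), but the logic is identical; just note that Lemma~\ref{ineq} as stated requires $|A'/B'|\le B$ rather than $|A'/B'|\le S$, so strictly speaking you pick up a harmless factor of $S$ in the bound $|A/B-A'/B'|\le |A-A'|+S|B-B'|$, which is absorbed into $K$.
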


    \begin{proof}
        Define
        $$A(\epsilon) = \left \la \exp \frac{\epsilon \beta}{\sqrt{N}} \sum_{i \le N} y_i \sigma_i \right \ra - \exp \left[ \frac{\epsilon^2 \beta^2}{2}(p-q) \right] \exp \frac{\epsilon \beta}{\sqrt{N}} \sum_{i \le N} y_i \la \sigma_i \ra. $$
        Note that $A(0)=0$. Deducing from \eqref{cor ine 1}, for $\gamma= 1, \ldots, S,$ we have
        \begin{align}
            \e \left( \gamma A(\gamma) \exp (\gamma^2 D + \gamma h) - \gamma A(-\gamma) \exp (\gamma^2 D - \gamma h)\right)^2 \le \frac{K}{\sqrt{N}}, \notag
        \end{align}
        and
        \begin{align}
            \e \left(A(\gamma) \exp (\gamma^2 D + \gamma h) + A(-\gamma) \exp (\gamma^2 D - \gamma h) +  A(0) \right)^2 \le \frac{K}{\sqrt{N}}. \notag
        \end{align}
        Hence,
        \begin{align}
            \e \left( \sum^{S}_{\gamma=1} \left[\gamma A(\gamma) \exp (\gamma^2 D + \gamma h) - \gamma A(-\gamma) \exp (\gamma^2 D - \gamma h)\right]\right)^2 \le \frac{K}{\sqrt{N}}, \notag
        \end{align}    
        i.e.
        \begin{align}
            \e \left(\left \la \Av \epsilon \E \right \ra - \sum_{\gamma=1}^{S} \gamma \cdot 2 \sh \left[ \gamma\left( \frac{\beta}{\sqrt{N}} \sum_{i \le N} y_i \la \sigma_i \ra + h\right)\right]\exp\left(\gamma^2 \left[ D+\frac{\beta^2}{2}(p-q)\right]\right)  \right)^2 \le \frac{K}{\sqrt{N}}, \notag
        \end{align}
        \begin{align}
            \e \left(\left \la \Av \epsilon^2 \E \right \ra - \sum_{\gamma=1}^{S} \gamma^2 \cdot 2 \ch  \left[ \gamma\left( \frac{\beta}{\sqrt{N}} \sum_{i \le N} y_i \la \sigma_i \ra + h\right)\right] \exp\left(\gamma^2 \left[ D+\frac{\beta^2}{2}(p-q)\right] \right)  \right)^2 \le \frac{K}{\sqrt{N}}, \notag
        \end{align}
        and 
        \begin{align}
            \e \left(\left \la \Av \E \right \ra - \left[ 1+ \sum_{\gamma=1}^{S} 2\ch \left[ \gamma\left( \frac{\beta}{\sqrt{N}} \sum_{i \le N} y_i \la \sigma_i \ra + h\right)\right]\exp\left(\gamma^2 \left[ D+\frac{\beta^2}{2}(p-q)\right]\right)\right] \right)^2 \le \frac{K}{\sqrt{N}}. \label{cor ineq 6}
        \end{align}
        Equations \eqref{cor ine 3} and \eqref{cor ine 5} follow from  Lemma \ref{ineq}. Using the same method, we get from \eqref{cor ine 2}
        \begin{align}
            \e \left( \left \la \frac{1}{\sqrt{N}} \sum_{i \le N} y_i \dot{\sigma}_i  \Av \E \right \ra - \beta(p-q) \sum_{\gamma =1}^{S} \gamma \cdot \exp \left[\frac{ \gamma^2 \beta^2}{2}(p-q)\right] 2\sh \left[ \gamma\left( \frac{\beta}{\sqrt{N}} \sum_{i \le N} y_i \la \sigma_i \ra + h\right)\right] \right)^2 \notag \\
             \le \frac{K}{\sqrt{N}}.  \label{cor ineq 7}
        \end{align}
        Combining \eqref{cor ineq 7} with \eqref{cor ineq 6} and using Lemma \ref{ineq}, we obtain 
        \begin{align}
            \e &\Bigg(\frac{\left \la \frac{1}{\sqrt{N}} \sum_{i \le N} y_i \dot{\sigma}_i  \Av \E \right \ra }{\left \la  \Av \E \right \ra} \notag\\ 
            &\phantom{=}\ - \beta(p-q) \frac{\sum_{\gamma=1}^{S} \gamma \cdot 2 \sh \left[ \gamma\left( \frac{\beta}{\sqrt{N}} \sum_{i \le N} y_i \la \sigma_i \ra + h\right)\right]\exp\left(\gamma^2 \left[ D+\frac{\beta^2}{2}(p-q)\right]\right)}{1+ \sum_{\gamma=1}^{S} 2\ch \left[ \gamma\left( \frac{\beta}{\sqrt{N}} \sum_{i \le N} y_i \la \sigma_i \ra + h\right)\right]\exp\left(\gamma^2 \left[ D+\frac{\beta^2}{2}(p-q)\right]\right)}  \Bigg)^2 \le \frac{K}{\sqrt{N}}.  \label{cor ineq 8}
        \end{align}
        Note that 
        $$\frac{\left \la \frac{1}{\sqrt{N}} \sum_{i \le N} y_i \dot{\sigma}_i  \Av \E \right \ra }{\left \la  \Av \E \right \ra} = \frac{1}{\sqrt{N}} \sum_{i \le N} y_i \frac{\left \la \sigma_i \Av \E \right \ra }{\left \la  \Av \E \right \ra}  - \frac{ 1}{\sqrt{N}} \sum_{i \le N} y_i \la \sigma_i \ra.$$
        Combining \eqref{cor ineq 8} with \eqref{cor ine 3} proves \eqref{cor ine 4}.      
    \end{proof}

    Finally, we turn to the proof of Theorem \ref{TAP}.
    \begin{proof}[\bf Proof of Theorem \ref{TAP}] 
        First we will show \eqref{TAP1}. Recall that the Hamiltonian \eqref{N-1 Hamil} is the Hamiltonian of an $(N-1)$-spin system with parameter 
        $$\beta_- = \beta \sqrt{1-\frac{1}{N}} \le \beta ,$$
        $$ \E = \exp \left( \frac{\epsilon \beta_-}{\sqrt{N-1}} \sum_{i \le N-1} g_{iN} \sigma_i + \epsilon^2  D + \epsilon h \right) = \exp \left( \frac{\epsilon \beta}{\sqrt{N}} \sum_{i \le N-1} g_{iN} \sigma_i + \epsilon^2  D + \epsilon h \right). $$
        By Proposition \ref{Expectation}, we have 
        $$\la \sigma_N \ra =  \frac{\la \Av \epsilon \E \ra_-}{\la \Av \E \ra_-}.$$
        Next, applying \eqref{cor ine 3} to the $(N-1)$-spin system and the sequence $y_i=g_{iN}$, we obtain
        \begin{align}
            &\e \left(\la \sigma_N \ra - \frac{\sum_{\gamma=1}^{S} \gamma \cdot 2 \sh \left[ \gamma\left( \frac{\beta}{\sqrt{N}} \sum_{i \le N} g_{iN} \la \sigma_i \ra_- + h\right)\right]\exp\left(\gamma^2 \left[ D+\frac{\beta^2_-}{2}(p_- -q_- )\right]\right)}{1+ \sum_{\gamma=1}^{S} 2\ch \left[ \gamma\left( \frac{\beta}{\sqrt{N}} \sum_{i \le N} g_{iN} \la \sigma_i \ra_- + h\right)\right]\exp\left(\gamma^2 \left[ D+\frac{\beta^2_-}{2}(p_- -q_-)\right]\right)}  \right)^2 \notag \\ 
            &\phantom{=============================================}\le \frac{K}{\sqrt{N}}. \notag 
        \end{align}
        Now to show \eqref{TAP1}, it suffices to show that 
        \begin{align*}
            &\e \Bigg( \frac{\sum_{\gamma=1}^{S} \gamma \cdot 2 \sh \left[ \gamma\left( \frac{\beta}{\sqrt{N}} \sum_{i \le N} g_{iN} \la \sigma_i \ra + h\right)\right]\exp\left(\gamma^2 \left[ D+\frac{\beta^2}{2}(p-q)\right]\right)}{1+ \sum_{\gamma=1}^{S} 2\ch \left[ \gamma\left( \frac{\beta}{\sqrt{N}} \sum_{i \le N} g_{iN} \la \sigma_i \ra + h\right)\right]\exp\left(\gamma^2 \left[ D+\frac{\beta^2}{2}(p-q)\right]\right)}\\
            &\phantom{=}\ - \frac{\sum_{\gamma=1}^{S} \gamma \cdot 2 \sh \left[ \gamma\left( \frac{\beta}{\sqrt{N}} \sum_{i \le N} g_{iN} \la \sigma_i \ra_- + h\right)\right]\exp\left(\gamma^2 \left[ D+\frac{\beta^2_-}{2}(p_- -q_- )\right]\right)}{1+ \sum_{\gamma=1}^{S} 2\ch \left[ \gamma\left( \frac{\beta}{\sqrt{N}} \sum_{i \le N} g_{iN} \la \sigma_i \ra_- + h\right)\right]\exp\left(\gamma^2 \left[ D+\frac{\beta^2_-}{2}(p_- -q_-)\right]\right)}   \Bigg)^2 \le \frac{K}{\sqrt{N}}. 
        \end{align*}
        Let 
        $$f(x,y) = \frac{\sum_{\gamma=1}^{S} \gamma \cdot 2 \sh(\gamma y)e^{\gamma^2 x}}{ 1 + \sum_{\gamma=1}^{S} 2\ch(\gamma y)e^{\gamma^2 x}}.$$
        and let
        \begin{align*}
                x_1=  D+\frac{\beta^2}{2}(p - q), \quad 
                y_1 = \frac{\beta}{\sqrt{N}} \sum_{i \le N-1} g_{iN} \la \sigma_i \ra - \beta^2 (p-q) \la \sigma_N \ra + h,    
        \end{align*}
        and
        \begin{align*}  
                x_2 = D+\frac{\beta_-^2}{2}(p_- -q_-), \quad
                y_2 = \frac{ \beta}{\sqrt{N}} \sum_{i \le N-1} g_{iN} \la \sigma_i \ra_- + h .
        \end{align*}
        We claim
        $$\e \left[ f(x_1,y_1) - f(x_2,y_2) \right] ^2 \le \frac{K}{\sqrt{N}}.$$
        By taking the partial derivatives of $f(x,y)$, it is straight-forward to show that $f(x,y)$ is a Lipschitz function with respect to both $x$ and $y$. There exists a positive number $L$ such that 
        $$\lvert f(x_1,y) - f(x_2,y) \rvert \le L |x_1 - x_2|,$$
        and
        $$\lvert f(x,y_1) - f(x,y_2) \rvert \le L |y_1 - y_2|.$$
        Thus using the fact that for any $A$ and $B$, $(A + B)^2 \le 2(A^2 + B^2)$, we obtain 
        $$\e \left[ f(x_1,y_1) - f(x_2,y_2) \right]^2 \le 2 L^2 \left[ \e \left(x_1 - x_2\right)^2  + \e \left(y_1 - y_2\right)^2\right].$$
        By Lemma \ref{difference nu_1&0} and \ref{lem pp- qq-}, it follows that 
        \begin{align*}
            |x_1 - x_2| &= \left \lvert \frac{\beta^2}{2}(p - q) - \frac{\beta_-^2}{2}(p_- -q_-) \right \rvert\\
            & \le \left \lvert \frac{p - q}{2} \left(\beta^2 - \beta_-^2 \right) \right \rvert + \left \lvert \frac{\beta_-^2}{2} \left(p - p_- \right) \right \rvert + \left \lvert \frac{\beta_-^2}{2} \left(q - q_- \right) \right \rvert \le \frac{K}{\sqrt{N}},
        \end{align*}
        i.e.
        $$\e \left(x_1 - x_2\right)^2 \le \frac{K}{\sqrt{N}}.$$
        Now applying \eqref{cor ine 4} to the $(N-1)$-spin system, we get
        \begin{align*}
            \e \left( \frac{1}{\sqrt{N-1}} \sum_{i \le N-1} g_{iN} \left \la \sigma_i  \right \ra - \beta_-(p_- -q_-)\left \la \sigma_i  \right \ra - \frac{ 1}{\sqrt{N-1}} \sum_{i \le N-1} g_{iN} \la \sigma_i \ra_- \right)^2 \le \frac{K}{\sqrt{N}}.
        \end{align*}
        If we multiply both sides by $\beta_-^2$, using $ \left \lvert \beta^2 - \beta_-^2 \right \rvert \le \frac{K}{\sqrt{N}}$ and Lemma \ref{lem pp- qq-} again, we have\
        \begin{align*}
            \e \left( \frac{\beta }{\sqrt{N}} \sum_{i \le N-1} g_{iN} \left \la \sigma_i  \right \ra - \beta^2 (p - q )\left \la \sigma_i  \right \ra - \frac{ \beta}{\sqrt{N}} \sum_{i \le N-1} g_{iN} \la \sigma_i \ra_- \right)^2 \le \frac{K}{\sqrt{N}},
        \end{align*}
        i.e.
        $$\e \left(y_1 - y_2\right)^2 \le \frac{K}{\sqrt{N}}.$$
        Therefore, we have
        $$\e \left[ f(x_1,y_1) - f(x_2,y_2) \right]^2 \le 2 L^2 \left[ \e \left(x_1 - x_2\right)^2  + \e \left(y_1 - y_2\right)^2\right] \le \frac{K}{\sqrt{N}}.$$
        Similarly, we can show \eqref{TAP2} using the same method. 
    \end{proof}

    \section{Proof of Proposition \ref{pq eqn}} \label{sec 4}

 In this proof, we use the same notation as in the proof of Lemma \ref{lem pp- qq-}.

    \begin{proof}
        Recall that 
        $$\phi (X) = \frac{\sum_{\gamma=1}^{S} \gamma^2 \cdot 2 \ch\left[\gamma \left( \sqrt{q}\beta X+h\right)\right]\exp\left(\gamma^2 \left[ D+\frac{\beta^2}{2}(p-q)\right]\right)}{1+ \sum_{\gamma=1}^{S} 2\ch\left[ \gamma \left(\sqrt{q}\beta X+h \right)\right]\exp\left(\gamma^2 \left[ D+\frac{\beta^2}{2}(p-q)\right]\right)},$$
        and 
        $$\psi (X) = \frac{ \sum_{\gamma=1}^{S} \gamma \cdot 2\sh\left[\gamma \left( \sqrt{q}\beta X+h\right)\right]\exp\left(\gamma^2 \left[ D+\frac{\beta^2}{2}(p-q)\right]\right)}{1+ \sum_{\gamma=1}^{S}  2\ch\left[\gamma \left( \sqrt{q}\beta X+h\right)\right]\exp\left(\gamma^2 \left[ D+\frac{\beta^2}{2}(p-q)\right]\right)}. $$
        We define functions $G\left(\beta, p, q\right)= \e \left[\phi(X)\right]$ and $F\left(\beta, p, q \right) = \e \left[\psi(X)\right]^2$, and hence the equations \eqref{p} and \eqref{q} become 
        $$ p = G\left(\beta, p, q\right), \,\,\, \text{and} \,\,\,  q = F\left(\beta, p, q \right). $$
        Define a self-mapping $T: \left[0,S^2\right] \times \left[0,S^2\right] \rightarrow \left[0,S^2\right] \times \left[0,S^2\right]$ by  $$T(p,q):=\left(G\left(\beta, p, q\right),F\left(\beta, p, q \right) \right).$$
By the contraction mapping theorem, it suffices to show that there exists a $\tilde \beta > 0$ such that for all $0\leq \beta < \tilde{\beta},$ $h \geq 0$, and $D \in \R,$ $T$ is a contraction. 

        We have
        $$\phi (X) =  \sum_{\gamma=1}^S \gamma^2 \cdot \kappa_{ch}(X)f(X) \leq S^2, \,\,  \,\psi (X) =  \sum_{\gamma=1}^S \gamma \cdot \kappa_{sh}(X)f(X) \leq S, $$
        $$\theta(X) = \sum_{\gamma=1}^S \gamma^4 \cdot \kappa_{ch}(X)f(X) \leq S^4, \,\, \text{and} \, \,\,\eta (X) =  \sum_{\gamma=1}^S \gamma^3 \cdot \kappa_{sh}(X)f(X) \leq S^3,$$
where $f$, $\kappa_{sh}$ and $\kappa_{ch}$ are given by \eqref{def:fL6},  \eqref{def:fL7}, and  \eqref{def:fL8}. Calculating the derivatives of the above functions, we obtain 
        $$\phi' (X) = \sqrt{q}\beta\left(\eta(X) - \phi(X)\psi(X)\right) \leq 2\sqrt{q}\beta S^3,  \,\, \, \psi' (X) = \sqrt{q}\beta \left(\phi(X) - \psi^2(X)\right) \leq 2\sqrt{q}\beta S^2, $$
        and
        $$ \eta'(X) =  \sqrt{q}\beta \left(\theta(X) - \eta(X)\psi(X)\right) \leq 2 \sqrt{q}\beta S^4.$$
        Therefore, we have the following: 
        \begin{align*}
            \frac{\partial G}{\partial p} &= \e \left[\frac{\beta^2}{2} \left( \theta(X)  - \phi^2(X)\right) \right] \leq S^4\beta^2 := L_1,\\
            \frac{\partial G}{\partial q} &= \e \left[ \frac{\beta}{2\sqrt{q}} \left(\eta'(X) - \psi'(X)\phi(X) - \psi(X)\phi'(X)\right) + \frac{\beta^2}{2}\left(\phi^2(X) - \theta(X) \right)\right] \leq 4S^4\beta^2 :=L_2,\\
            \frac{\partial F}{\partial p} &= \e \left[ \beta^2 \psi(X) \left(\eta(X) - \psi(X)\phi(X)\right) \right] \leq 2S^4\beta^2 :=L_3,\\
            \frac{\partial F}{\partial q} &= \e \left[ \frac{\beta}{\sqrt{q}} \left(\psi'(X)\phi(X) + \psi(X)\phi'(X) - 3\psi^2(X)\psi'(X)\right) + \beta^2 \left(\psi^2(X)\phi(X) - \psi(X)\eta(X)\right)\right] \\
            &\phantom{==========================================}\ \leq 12S^4\beta^2 :=L_4.
        \end{align*}
        By Cauchy's inequality, we have 
        $$\lvert G\left(p_1,q_1\right) - G\left(p_2,q_2\right) \rvert \leq L_1 \lvert p_1 - q_1 \rvert + L_2 \lvert p_2 - q_2 \rvert \leq \sqrt{L_1^2 + L_2^2} \sqrt{\lvert p_1 - q_1 \rvert^2 + \lvert p_2 - q_2 \rvert^2},$$
        and similarly,
        $$\lvert F\left(p_1,q_1\right) - F\left(p_2,q_2\right) \rvert \leq L_3 \lvert p_1 - q_1 \rvert + L_4 \lvert p_2 - q_2 \rvert \leq \sqrt{L_3^2 + L_4^2} \sqrt{\lvert p_1 - q_1 \rvert^2 + \lvert p_2 - q_2 \rvert^2}.$$
        Hence,
        \begin{align*}
            \lvert T(p_1,q_1) - T(p_2, q_2)\rvert &= \sqrt{\left[ G\left(p_1,q_1\right) - G\left(p_2,q_2\right)\right]^2 + \left[ F\left(p_1,q_1\right) - F\left(p_2,q_2\right)\right]^2}\\
            &\leq \sqrt{\sum_{i=1}^4 L_i^2} \cdot \sqrt{\lvert p_1 - q_1 \rvert^2 + \lvert p_2 - q_2 \rvert^2} = \sqrt{165} S^4 \beta^2 \sqrt{\lvert p_1 - q_1 \rvert^2 + \lvert p_2 - q_2 \rvert^2}.
        \end{align*}
        To make this map a contraction map, we need $\sqrt{165} S^4 \beta^2 < 1$. Let $\tilde{\beta} = \frac{1}{\sqrt[4]{165}S^2}$. Thus, for all $0\leq \beta < \tilde{\beta},$ $h \geq 0$, and $D \in \R,$ $T$ is a contraction mapping.

    \end{proof}

\bibliographystyle{acm}

\bibliography{reference-1}

\begin{thebibliography}{10}

\bibitem{adhikari_Yau_2021}
{\sc Adhikari, A., Brennecke, C., von Soosten, P., and Yau, H.-T.}
\newblock Dynamical {Approach} to the {TAP} {Equations} for the
  {Sherrington}-{Kirkpatrick} {Model}.
\newblock arXiv: 2102.10178.

\bibitem{AB}
{\sc Auffinger, A., and Ben~Arous, G.}
\newblock Complexity of random smooth functions on the high-dimensional sphere.
\newblock {\em Ann. Probab. 41}, 6 (2013), 4214--4247.

\bibitem{ABC}
{\sc Auffinger, A., Ben~Arous, G., and \v{C}ern\'{y}, J.}
\newblock Random matrices and complexity of spin glasses.
\newblock {\em Comm. Pure Appl. Math. 66}, 2 (2013), 165--201.

\bibitem{AA_AJ_2019}
{\sc Auffinger, A., and Jagannath, A.}
\newblock Thouless-anderson-palmer equations for generic p-spin glasses.
\newblock {\em Annals of Probability 47}, 4 (2019), 2230--2256.

\bibitem{bolthausen_iterative_2014}
{\sc Bolthausen, E.}
\newblock An {Iterative} {Construction} of {Solutions} of the {TAP} {Equations}
  for the {Sherrington}-{Kirkpatrick} {Model}.
\newblock {\em Communications in Mathematical Physics 325}, 1 (Jan. 2014),
  333--366.

\bibitem{chatterjee_spin_2010}
{\sc Chatterjee, S.}
\newblock Spin glasses and {Stein}{\textquoteright}s method.
\newblock {\em Probability Theory and Related Fields 148}, 3-4 (Nov. 2010),
  567--600.

\bibitem{chen_tang_2020}
{\sc Chen, W.-K., and Tang, S.}
\newblock On convergence of {Bolthausen}'s {TAP} iteration to the local
  magnetization.
\newblock arXiv: 2011.00495.

\bibitem{da_costa_zero_temperature_2000}
{\sc Costa, F., and Ara\'ujo, J.}
\newblock Zero-temperature {TAP} equations for the {Ghatak}-{Sherrington}
  model.
\newblock {\em The European Physical Journal B 15}, 2 (May 2000), 313--316.

\bibitem{costa_first-order_1994}
{\sc Costa, F., Yokoi, C. S.~O., and Salinas, S. R.~A.}
\newblock First-order transition in a spin-glass model.
\newblock {\em Journal of Physics A: Mathematical and General 27}, 10 (May
  1994), 3365--3372.

\bibitem{ghatak_crystal_1977}
{\sc Ghatak, S.~K., and Sherrington, D.}
\newblock Crystal field effects in a general {S} {Ising} spin glass.
\newblock {\em Journal of Physics C: Solid State Physics 10}, 16 (Aug. 1977),
  3149--3156.

\bibitem{katsuki_katayama_ghatak-sherrington_1999}
{\sc Katayama, K., and Horiguchi, T.}
\newblock Ghatak-sherrington model with spin s.
\newblock {\em Journal of the Physical Society of Japan 68}, 12 (1999),
  3901--3910.

\bibitem{lage_stability_1982}
{\sc Lage, E. J.~S., and Almeida, J. R. L.~d.}
\newblock Stability conditions of generalised {Ising} spin glass models.
\newblock {\em Journal of Physics C: Solid State Physics 15}, 33 (Nov. 1982),
  L1187--L1193.

\bibitem{Leuzzi}
{\sc Leuzzi, L.}
\newblock Spin-glass model for inverse freezing.
\newblock {\em Philosophical Magazine 87}, 3-5 (2007), 543--551.

\bibitem{mottishaw_stability_1985}
{\sc Mottishaw, P.~J., and Sherrington, D.}
\newblock Stability of a crystal-field split spin glass.
\newblock {\em Journal of Physics C: Solid State Physics 18}, 26 (Sept. 1985),
  5201--5213.

\bibitem{panchenko_free_2005}
{\sc Panchenko, D.}
\newblock Free energy in the generalized {Sherrington}-{Kirkpatrick} mean field
  model.
\newblock {\em Reviews in Mathematical Physics 17}, 07 (Aug. 2005), 793--857.
\newblock arXiv: math/0405362.

\bibitem{sherrington_solvable_1975}
{\sc Sherrington, D., and Kirkpatrick, S.}
\newblock Solvable {Model} of a {Spin}-{Glass}.
\newblock {\em Physical Review Letters 35}, 26 (Dec. 1975), 1792--1796.

\bibitem{Subag}
{\sc Subag, E.}
\newblock The geometry of the {G}ibbs measure of pure spherical spin glasses.
\newblock {\em Invent. Math. 210}, 1 (2017), 135--209.

\bibitem{talagrand_mean_2011}
{\sc Talagrand, M.}
\newblock {\em Mean {Field} {Models} for {Spin} {Glasses} {Volume} {I}}.
\newblock Springer Berlin Heidelberg, Berlin, Heidelberg, 2011.

\bibitem{thouless_solution_1977}
{\sc Thouless, D.~J., Anderson, P.~W., and Palmer, R.~G.}
\newblock Solution of '{Solvable} model of a spin glass'.
\newblock {\em Philosophical Magazine 35}, 3 (Mar. 1977), 593--601.

\bibitem{yokota_first_order_1992}
{\sc Yokota, T.}
\newblock First-order transitions in an infinite-range spin-glass model.
\newblock {\em Journal of Physics: Condensed Matter 4}, 10 (Mar. 1992),
  2615--2622.

\end{thebibliography}

\end{document}